\def\NAT@def@citea{\def\@citea{\NAT@separator}}
\theoremstyle{plain}
\newtheorem{theorem}{Theorem}[section]
\newtheorem{lemma}[theorem]{Lemma}
\newtheorem{corollary}[theorem]{Corollary}
\theoremstyle{definition}
\newtheorem{definition}[theorem]{Definition}
\newtheorem{example}[theorem]{Example}
\theoremstyle{remark}
\newtheorem{remark}{Remark}
\begin{document}

\articletype{Original Research Article}

\title{Univalence of horizontal shear of Ces\`aro type transforms}

\author{
\name{Swadesh Kumar Sahoo\textsuperscript{a}\thanks{CONTACT S.~K. Sahoo. Email: swadesh.sahoo@iiti.ac.in} and Sheetal Wankhede\textsuperscript{b}}
\affil{\textsuperscript{a,b}Department of Mathematics, Indian Institute of Technology Indore, Simrol, Indore 453552, India}
}

\maketitle

\begin{abstract}
This manuscript investigates the classical problem of determining conditions on the parameters $\alpha,\beta \in \mathbb{C}$ for which the integral transform 
	$$C_{\alpha\beta}[\varphi](z):=\int_{0}^{z} \bigg(\frac{\varphi(\zeta)}{\zeta (1-\zeta)^{\beta}}\bigg)^\alpha\,d\zeta
	$$
is also univalent in the unit disk, where $\varphi$ is a normalized univalent function.
Additionally, whenever $\varphi$ belongs to some subclasses of the class of univalent functions, the univalence features of the harmonic mappings corresponding to $C_{\alpha\beta}[\varphi]$ and its rotations are derived. 
As applications to our primary findings, a few non-trivial univalent harmonic mappings are also provided.  The primary tools employed in this manuscript are Becker's univalence criteria and the shear construction developed by Clunie and Sheil-Small.
\end{abstract}

\begin{keywords}
Integral transform; Shear construction; Harmonic univalent mappings;
Starlike functions; Convex functions; Close-to-convex functions
\end{keywords}

\section{Introduction}\label{IntroductionSection}
Let $\mathcal{A}$ denote the class of all analytic functions $\varphi$ in the open unit disk 
$\mathbb{D}=\lbrace z\in \mathbb{C}:\, |z|<1\rbrace$ with the normalization $\varphi(0)=0$ 
and $\varphi'(0)=1$. The subclass $\mathcal{S}$ of 
$\mathcal{A}$ consists of all univalent functions in $\mathbb{D}$. 
A function $\varphi\in\mathcal{A}$ is said to be {\em starlike of order $\delta$}, $0\le \delta<1$, if it satisfies 
${\rm Re}\,[z\varphi'(z)/\varphi(z)]>\delta$ for all $z\in\mathbb{D}$, and
is said to be {\em convex} if ${\rm Re}\,[1+z\varphi''(z)/\varphi'(z)]>0$
for all $z\in\mathbb{D}$.
The subclass of $\mathcal{S}$ made up of starlike functions of order $\delta$ 
is denoted by the symbol $\mathcal{S}^*(\delta)$.
It should be noted that a function $\varphi$ is referred to as {\em starlike} 
if it is a member of $\mathcal{S}^*(0)=:\mathcal{S}^*$.
We designate the class of convex univalent functions by 
$\mathcal{K}$.  
A function $\varphi\in\mathcal{A}$ is known as {\em close-to-convex} if and only if
$\int_{\theta_1}^{\theta_2}{\rm Re}\,[1+z\varphi''(z)/\varphi'(z)]\,d\theta>-\pi,~z=re^{i\theta}$,
for each $r\in(0,1)$ and for each pair of real numbers $\theta_1,\theta_2$ with $\theta_1<\theta_2$.
The class of close-to-convex functions is denoted by $\mathcal{CC}$.
It is well-known that $\mathcal{K}\subsetneq \mathcal{S}^*\subsetneq \mathcal{CC}\subsetneq \mathcal{S}$.

The traditional Alexander Theorem, which asserts that $\varphi\in \mathcal{S^{*}}$ if and only if $J[\varphi]\in \mathcal{K}$, where the Alexander transform $J[\varphi]$ of $\varphi\in \mathcal{A}$ defined as 
$$
J[\varphi](z)=\int_{0}^{z}\frac{\varphi(\zeta)}{\zeta}\, d\zeta,
$$
provides an important relationship between the classes $\mathcal{S^{*}}$ and $\mathcal{K}$. 
According to \cite[\S 8.4]{Dur83}, if $\varphi\in\mathcal{S}$, then $J[\varphi]$ is not always in $\mathcal{S}$. This provides impetus to research 
the preserving properties of the Alexander and related transforms of classical classes of univalent functions; see for instance \cite{KS20} and references therein. The Alexander transform was initially generalized to the following form (see \cite{Cau67,Cau71,MW71,Nun69}) in order to investigate the univalence characteristics of the integral transforms of the aforementioned kind:
$$
J_{\alpha}[\varphi](z)=\int_{0}^{z}\Big(\frac{\varphi(\zeta)}{\zeta}\Big)^{\alpha}\, d\zeta,
\quad \alpha\in\mathbb{C}.
$$
Note that $J_{1}[\varphi]=J[\varphi]$ and $J_{\alpha}[\varphi]=(I_{\alpha} \circ J)[\varphi]$, 
where $I_{\alpha}[\varphi]$ is
the Hornich scalar multiplication operator of a {\em locally univalent function} $\varphi$ (i.e. $\varphi'(z)\neq 0$) in $\mathbb{D}$ defined by
$$
I_{\alpha}[\varphi](z)=(\alpha \star \varphi(z))=\int_{0}^{z} \lbrace \varphi'(\zeta) \rbrace^{\alpha}\, d\zeta.
$$
The operator $J_{\alpha}[\varphi]$ was later considered by Kim and Merkes \cite{KM72}, and they showed that $J_{\alpha}(\mathcal{S})\subset \mathcal{S}$ for $|\alpha|\leq 1/4$. Further, the complete range of 
$\alpha$ for $J_{\alpha}(\mathcal{S})\subset \mathcal{S}$
was found by Aksent'ev and Nezhmetdinov \cite{AN87}. For the univalence of the 
operator $I_\alpha[\varphi]$, the ranges of $\alpha$ are obtained in \cite{Pfa75,Roy65}
whenever $\varphi$ is an analytic univalent function. Moreover, 
for the meromorphic univalent functions $\varphi$, conditions on $\alpha$ are obtained in \cite{NP03} for which $I_\alpha[\varphi]$ is also meromorphic univalent.
Readers can also see the work of Ponnusamy and Singh \cite{PS96} for
the univalence properties of the transforms $I_\alpha[\varphi]$ and $J_\alpha[\varphi]$
when $\varphi$ varies over other classical subclasses of $\mathcal{S}$.
It is worth noting that the univalence 
of the transforms $I_\alpha[\varphi]$ and $J_\alpha[\varphi]$ generate numerous examples of 
integral transforms which are indeed univalent.

In addition to the significance of the Alexander transform in the context of univalency, the Ces\`aro transform of $\varphi\in\mathcal{A}$, which is defined by 
$$
C[\varphi](z)=\int_{0}^{z} \frac{\varphi(\zeta)}{\zeta (1-\zeta)}\, d\zeta,
$$
has also been taken into account (see \cite{HM74}).
It is worth recalling that if $\varphi\in\mathcal{S}$ then $C[\varphi]$ may not be in
$\mathcal{S}$, see \cite[Theorem~3]{HM74}. 
Furthermore, in view of \cite[p.~424]{HM74}, the Koebe function illustrates that the starlike functions need not be preserved by the Ces\`aro transform. 
However, it is proved that the transform $C[\varphi]$ preserves the class $\mathcal{K}$; see \cite[Theorem~1]{HM74}. 
This fact encourages us to investigate the univalence properties of a generalised integral transform that incorporates both the Alexander and Ces\`aro transforms,
which is defined by
\begin{equation}\label{Eq1.1P1}
C_{\alpha\beta}[\varphi](z)=J_\alpha[\varphi]\oplus I_{\alpha\beta}[\chi]=\int_{0}^{z} \Big(\frac{\varphi(\zeta)}{\zeta (1-\zeta)^{\beta}}\Big)^{\alpha}\, d\zeta, \quad \alpha,\beta\in \mathbb{C},
\end{equation}
where $\chi(z)=-\log(1-z)$ with a suitable branch. Here, $\oplus$ denotes the Hornich
addition operator defined by
$$
(\varphi\oplus \psi)(z)=\int_0^z \varphi'(\zeta)\,\psi'(\zeta)\,d\zeta
$$
between $\varphi,\psi\in \mathcal{A}$ with $\varphi'(z)\neq 0$ and $\psi'(z)\neq 0$.
It is important to note that the operator $C_{\alpha\beta}[\varphi]$ is equivalent to the form having the integrand $(\varphi(\zeta)/\zeta)^\alpha(1-\zeta)^{-\delta}$ for some $\delta\in\mathbb{C}$. In our case, $\delta=\alpha\beta$.
We write $C_\alpha[\varphi]:=C_{\alpha 1}[\varphi]$.
Consequently, it should be noticed that $C_{11}[\varphi]=C_1[\varphi]=C[\varphi]$, $C_{\alpha 0}[\varphi]=J_\alpha[\varphi]$, and 
$C_{\alpha\beta}[\varphi]=(I_{\alpha}\circ C_{\beta})[\varphi]$, where 
$$
C_{\beta}[\varphi](z)=\int_{0}^{z} \frac{\varphi(\zeta)}{\zeta (1-\zeta)^{\beta}}\, d\zeta, \quad \beta\in\mathbb{C}.
$$
While $\varphi$ varies over specific subclasses of $\mathcal{S}$, the analytic and geometric properties of $C_\beta[\varphi]$ have been explored in \cite{KS20-1,KS20,PSS19}.

The major objective of this manuscript is to deepen our understanding of the univalence of Ces\`aro type 
integral transforms of analytic functions to the harmonic setting. 
Let $\mathbb{H}$ denote the class of all harmonic mappings $f=h+\overline{g}$
in $\mathbb{D}$ with the normalization $h(0)=g(0)=0$ and $h'(0)=1$.
Here, the functions $h$ and $g$ are called the {\em analytic} and the {\em co-analytic} parts of $f$, respectively. 
The notations
$$
\mathcal{S}_{\mathbb{H}}=\{f\in \mathbb{H}:\,\mbox{ $f$ is univalent in $\mathbb{D}$}\}
~\mbox{ and }~
\mathcal{CC}_{\mathbb{H}}=\{f\in \mathbb{H}:\,\mbox{ $f$ is close-to-convex in $\mathbb{D}$}\},
$$
respectively, represent the class of harmonic univalent and
harmonic close-to-convex mappings in $\mathbb{D}$. 
Here, $f\in \mathbb{H}$ is called a close-to-convex function if $f(\mathbb{D})$
is a close-to-convex domain \cite{CS84}.
Note that
$\mathcal{CC}_{\mathbb{H}}\subsetneq \mathcal{S}_{\mathbb{H}}$. 
Now we recall that a complex-valued harmonic mapping 
$f=h+\overline{g}$ defined on a simply connected domain $\Omega$
is called {\em locally univalent} if the Jacobian of $f$ defined by $J_{f}=|h'|^2-|g'|^2$ is non-vanishing.
Further, it is called sense-preserving if $J_{f}>0$, or equivalently,
the second complex dilatation $\omega=g'/h'$ has the property that $|\omega(z)|<1$ in $\Omega$, see \cite{Lew36}.
In this context, $f=h+\overline{g}$ is called the {\em horizontal shear} of $h-g=:\varphi$ with its dilatation $\omega=g'/h'$.
For this purpose, one can use the method of shear construction as a tool to construct univalent harmonic mappings that are convex in same direction. 
A domain  is said to be convex in the horizontal direction (CHD) if its intersection with each horizontal line is connected (or empty).
A function $\varphi$ defined on $D$ is said to be {\em convex in the horizontal direction} (CHD) if $\varphi(D)$ is convex in the horizontal direction.

The following algorithm describes the horizontal shear construction for $f=h+\overline{g}$:

\medskip
\noindent
{\bf Algorithm for horizontal shear construction.}
\begin{enumerate}
\item choosing a conformal mapping $\varphi$ which is convex in horizontal direction;
\item choosing a dilatation $\omega$;
\item computing $h$ and $g$ by solving the system of equations $h-g=:\varphi,~\omega=g'/h'$; 
\item constructing the harmonic mapping $f=h+\overline{g}$.
\end{enumerate}
Clunie and Sheil-Small first introduced this approach in \cite{CS84}, and it was subsequently used by others (see for instance, \cite[Section~3.4, p. 36]{Dur04} and \cite{PQR14}).
Geometrically, a given locally univalent analytic function 
is sheared (i.e. stretched and translated) along parallel lines to produce a 
harmonic mapping onto a domain convex in one direction.

In our discussion, we use this algorithm to take into account harmonic mappings that correspond to the integral transform $C_{\alpha\beta}$ and its rotation with some dilatation depending upon $\alpha$ and $\beta$.
We now recall that Bravo et al. \cite{BHV17} extended the Ahlfors' univalence criteria \cite{Ahl74} to the harmonic case to extend the problem of univalence of $I_\alpha[\varphi]$ to the complex-valued harmonic mappings.
In fact, in \cite{ABHSV20}, a new approach has been initiated to study
the problem of univalence of $I_\alpha[\varphi]$ and $J_\alpha[\varphi]$ 
to the case of harmonic mappings
using the method of shear construction \cite{CS84}. 
The Ces\`aro integral transform and its generalization, however, are not included in either of these two transformations to investigate their univalency in both harmonic and analytical contexts. 
This is the primary justification for our consideration of the integral transform $C_{\alpha\beta}[\varphi]$ to broaden the issues researched in \cite{ABHSV20}.
Indeed, in order to have additional information that incorporates the discoveries from \cite{ABHSV20}, we present a general approach for addressing such issues. Moreover, this generates a number of integral 
transforms of functions that are harmonic and univalent.


\section{Preliminaries}\label{PreliminariesSection}
In this section we collect basic definitions and some well-known results which are used
in the subsequent sections. 
The harmonic Schwarzian and pre-Schwarzian derivatives for sense-preserving harmonic mappings $f=h+\overline{g}$ are investigated in detail by Hern\'andez and Martin in \cite{HM15}. Further applications of harmonic Schwarzian and pre-Schwarzian derivatives for sense-preserving harmonic mappings can be found from  \cite{HM13,HM15-1} and more recently \cite{BHPV22} includes such investigations on logharmonic mappings. Note that
the pre-Schwarzian derivative of a sense-preserving harmonic mapping $f=h+\overline{g}$ 
is defined by
\begin{equation}\label{Eq2.1P1}
P_{f}=\frac{h''}{h'}-\frac{\overline{\omega}\omega'}{1-|\omega|^2}
=\frac{\partial}{\partial z} \log(J_{f}).  
\end{equation}
If $f$ is analytic (i.e. $g\equiv 0$) then $P_{f}=h''/h'$, which is nothing but 
the classical pre-Schwarzian derivative of $f=h$. 
However,  the authors of \cite{HM15} demonstrated that given a sense-preserving harmonic mapping $f$, $P_{f+\overline{af}}=P_{f}$ for $a\in \mathbb{D}$, and 
they established an extension of Becker's criterion of univalence.

\medskip
\noindent
{\bf Lemma~A.}
{\em Let $f=h+\overline{g}$ be a sense-preserving harmonic mapping in the unit disk $\mathbb{D}$ with dilatation $\omega$. If for all $z\in \mathbb{D}$
$$
(1-|z|^{2})|zP_{f}(z)|+\frac{|z\omega^{'}(z)|(1-|z|^{2})}{1-|\omega(z)|^{2}}\leq 1,
$$
then $f$ is univalent. The constant $1$ is the best possible bound.}

Similar types of univalence criteria for harmonic mappings can be found in \cite{ANS16}.
Similar to the case of analytic univalent functions, the notion of pre-Schwarzian derivatives is also used to obtain certain necessary and sufficient conditions for harmonic univalent functions; see \cite{LP19} and Lemma~A respectively.  
Moreover, in 2016, Graf obtained certain bounds of the pre-Schwarzian and Schwarzian 
derivatives in terms of the order of linear and affine invariant families of
sense-preserving harmonic mappings of the unit disk; see \cite{Gra16}.
It is also noteworthy that for the class of uniformly locally univalent harmonic mappings, the authors of \cite{LP18} provided a relationship between its pre-Schwarzian norm and uniformly hyperbolic radius, and also characterized uniformly locally univalent sense-preserving harmonic mappings in multiple ways. 
It is also important to study sufficient conditions for close-to-convexity which also generate more
univalent functions. In this flow, 
the following useful result is quoted from \cite[Theorem~4]{BJJ13}:

\medskip
\noindent
{\bf Lemma~B.}
{\em Let $f=h+\overline{g}$ be a harmonic mapping in $\mathbb{D}$, with $h'(0)\neq 0$ and 
$$
{\rm Re}\,\bigg[1+\frac{zh''(z)}{h'(z)}\bigg]> c
$$
for some $c$ with $-1/2 <c \leq 0$, for all $z\in \mathbb{D}$. If the dilatation $\omega(z)$ satisfies the condition $|\omega(z)|< cos(\pi c)$ for $z\in \mathbb{D}$, then $f$ is close-to-convex in $\mathbb{D}$.}

One can note that $\omega(z)\to 0$ whenever $c \to (-1/2)^+$. Therefore, the case $c=-1/2$ was studied separately by Bharanedhar and Ponnusamy \cite{BP14}. This was initially a conjecture by Mocanu (see \cite[p.~764]{Moc11}) which was later settled in \cite{BL11} for the case $\theta =0$. The authors of \cite{MP16,PK15} further provided some general sufficient conditions for a sense-preserving harmonic mapping to be close-to-convex.

\medskip
Next we deal with certain necessary conditions for univalency of functions 
belonging to linear invariant family (LIF) of analytic functions.
A family $\mathcal{L}$ of normalized locally univalent functions is called LIF, if for any function $\varphi\in \mathcal{L}$, we have 
$$\frac{(\varphi\circ \varphi_{a})(z)-\varphi(a)}{(1-|a|^{2})\varphi'(a)}
\in \mathcal{L},
$$ 
for each automorphism $\varphi_{a}(z)=(z+a)/(1+\overline{a}z)$ of $\mathbb{D}$. 
The concept of LIF was introduced by Pommerenke in 1964 (see \cite{Pom64}) 
and since then it is widely studied in different contexts 
including harmonic mappings of the single and several complex variables, 
see for example \cite{Dur04, GK03}. 
The quantity 
$$
\gamma:=\sup\{|a_2(\varphi)|:\,\varphi(z)\in \mathcal{L}\}
$$
is what determines the {\em order of a family} $\mathcal{L}$,
where $a_2(\varphi)$ is the second Taylor coefficient of $\varphi(z)$. 
Let $\mathcal{L}(\gamma)$ be a linear invariant family of 
analytic functions in $\mathbb{D}$ of order $\gamma$, $\gamma\ge 1$ 
(see \cite{CCP71,Pom64}).
Since $|a_2(\varphi)|\le 2$ for a function $\varphi\in\mathcal{S}$,
it is evident that $\mathcal{S}=\mathcal{L}(2)$.
In connection with the order of LIF, 
the following  lemma, recently showed in \cite[Lemma~3]{ABHSV20}, is used in this
manuscript.

\medskip
\noindent
{\bf Lemma~C.}
{\em  For each univalent function 
$\varphi \in \mathcal{L}(\gamma)$, $1\le \gamma < \infty$, we have
$$
(1-|z|^2)\Big|\frac{z\varphi'(z)}{\varphi(z)}\Big| \leq 2\gamma
$$
for all $z\in \mathbb{D}$.}

\medskip
Next we focus on the concept of stable harmonic univalent functions defined as follows. For this, we frequently use the notation $\mathbb{T}$ to denote the unit 
circle $|z|=1$. 
A sense-preserving harmonic mapping $f = h + \overline{g}$ is called {\em stable
harmonic univalent} (resp. {\em stable harmonic close-to-convex}) in $\mathbb{D}$ 
if all the mappings $f_\lambda= h + \lambda \overline{g}$, $\lambda\in\mathbb{T}$, are univalent (resp. close-to-convex) in $\mathbb{D}$.
We use the notations $\mathcal{SHU}$ and $\mathcal{SHCC}$
to denote the class of stable harmonic univalent functions and the class of stable harmonic close-to-convex functions, respectively. 
Note that the following inclusion relations are well-known: 
$$\mathcal{SHU}\subsetneq \mathcal{S}_{\mathbb{H}},~ \mathcal{SHCC}\subsetneq \mathcal{CC}_{\mathbb{H}},
$$
and also as discussed in \cite{HM13-1} we have 
$$
\mathcal{SHCC} \subsetneq \mathcal{SHU}.
$$
Surprisingly, the authors of \cite{HM13-1} provided the following useful characterization
for a stable harmonic mapping.

\medskip
\noindent
{\bf Lemma~D.}
{\em A function $f=h+\overline{g}$ belongs to $\mathcal{SHU}$ (resp. $\mathcal{SHCC}$) 
if and only if for all $\lambda\in\mathbb{T}$, the analytic function 
$h+\lambda g$ is univalent (resp. close-to-convex).}


\section{Univalence properties}
This section is devoted to the problem of studying the univalence of the integral transform $C_{\alpha\beta}[\varphi]$
whenever $\varphi$ belongs to certain subclasses of the class $\mathcal{S}$.
In addition, we also aim to extend the problem of univalence of $C_{\alpha\beta}[\varphi]$ to the setting of harmonic mappings in the plane. 
For this purpose, we use the method of shear construction as noted in Section~1. 
Throughout this paper we consider $\alpha, \beta \in \mathbb{C}$ unless they are specified.

The first result of this section obtains condition
on $\alpha$ and $\beta$ for which $C_{\alpha\beta}[\varphi]$ is univalent in $\mathbb{D}$
whenever $\varphi\in\mathcal{S}$.

\begin{theorem}\label{thm3.1P1}
If $\varphi \in \mathcal{S}$, then $C_{\alpha \beta}[\varphi]$ is contained in $\mathcal{S}$ 
for $|\alpha|\leq {1}/{[2(2+|\beta|)]}$.
\end{theorem}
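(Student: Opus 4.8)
The plan is to establish univalence of $F:=C_{\alpha\beta}[\varphi]$ by verifying the (analytic) Becker univalence criterion, i.e. the special case $g\equiv 0$ of Lemma~A, which reads $(1-|z|^2)\,|zF''(z)/F'(z)|\le 1$. First I would check that $F$ is admissible. Since $\varphi\in\mathcal{S}$ vanishes only at the origin and $\varphi(z)/z\to 1$ as $z\to 0$, the quotient $\varphi(z)/z$ is zero-free on the simply connected disk $\mathbb{D}$, so a single-valued holomorphic branch of $\big(\varphi(z)/(z(1-z)^{\beta})\big)^{\alpha}$ exists; this branch is exactly $F'$, which is therefore nonvanishing, with $F'(0)=1$. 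Thus $F$ is a normalized, locally univalent analytic function, and taking the logarithmic derivative of $F'$ gives
$$\frac{zF''(z)}{F'(z)}=\alpha\left[\frac{z\varphi'(z)}{\varphi(z)}-1+\frac{\beta z}{1-z}\right].$$
Becker's criterion then follows once $(1-|z|^2)$ times the modulus of the bracket is bounded by $1/|\alpha|$.

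Next I would split the bracket into an $\varphi$-part, $\dfrac{z\varphi'(z)}{\varphi(z)}-1$, and a Ces\`aro part, $\dfrac{\beta z}{1-z}$, and estimate each after multiplying by $(1-|z|^2)$. The Ces\`aro part is elementary: using $|1-z|\ge 1-|z|$ one gets $(1-|z|^2)\left|\dfrac{\beta z}{1-z}\right|\le |\beta|(1+|z|)|z|<2|\beta|$. For the $\varphi$-part I would bring in Lemma~C with $\gamma=2$ (since $\mathcal{S}=\mathcal{L}(2)$), which controls the logarithmic derivative $z\varphi'(z)/\varphi(z)$ of a univalent function.

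The decisive point, and the step I expect to be the main obstacle, is to show the sharp bound $(1-|z|^2)\left|\dfrac{z\varphi'(z)}{\varphi(z)}-1\right|\le 4$ for $\varphi\in\mathcal{S}$, rather than the weaker value $5$ obtained by applying Lemma~C to $z\varphi'(z)/\varphi(z)$ (which yields $\le 4$) and then adding the constant term's contribution $(1-|z|^2)\le 1$ through the triangle inequality. The constant $4$ is exactly what the theorem needs: combined with the Ces\`aro estimate it gives $(1-|z|^2)\,|zF''/F'|\le |\alpha|(4+2|\beta|)=2|\alpha|(2+|\beta|)$, so Becker's criterion holds whenever $|\alpha|\le 1/[2(2+|\beta|)]$, and the case $\beta=0$ recovers the classical Kim--Merkes bound $|\alpha|\le 1/4$ for $J_{\alpha}(\mathcal{S})\subset\mathcal{S}$. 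The crude value $5$ would only deliver $|\alpha|\le 1/(5+2|\beta|)$, which is too weak, so the whole result hinges on handling the combination $\dfrac{z\varphi'}{\varphi}-1$ (equivalently, the logarithmic derivative of $\varphi(z)/z$) without losing the extra unit. I would secure this either from the sharp region of values of $z\varphi'(z)/\varphi(z)$ over $\mathcal{S}$ at fixed $|z|$ (the Koebe function shows $4$ is attained in the limit $z\to 1^{-}$ and is not exceeded), or by reproducing the Kim--Merkes estimate directly. Once this bound is in hand, $F'$ nonvanishing together with Becker's criterion yields $F\in\mathcal{S}$, completing the proof.
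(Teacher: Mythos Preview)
Your approach is essentially the one the paper takes: compute $zF''/F'=\alpha\big(z\varphi'/\varphi-1+\beta z/(1-z)\big)$, bound each summand after multiplying by $1-|z|^2$, and apply Becker's criterion. The paper dispatches what you flag as the ``decisive point'' in a single line by invoking the classical inequality (Goodman, \emph{Univalent Functions}, Vol.~1, Theorem~9, p.~69)
\[
\left|\frac{z\varphi'(z)}{\varphi(z)}-1\right|\le \frac{2}{1-|z|}\qquad(\varphi\in\mathcal{S}),
\]
which immediately gives $(1-|z|^2)\big|z\varphi'/\varphi-1\big|\le 2(1+|z|)<4$; combined with $(1-|z|^2)\big|\beta z/(1-z)\big|\le |\beta|(1+|z|)<2|\beta|$ one obtains $2|\alpha|(2+|\beta|)\le 1$. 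So there is no obstacle: the needed bound is a standard fact about $\mathcal{S}$ rather than something requiring a separate argument via Lemma~C or the region of values.
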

\begin{proof}
By the definition of $C_{\alpha\beta}[\varphi]$, the concept of logarithmic derivative
followed by the triangle inequality leads to
$$
(1-|z|^2)\bigg|\frac{z(C_{\alpha \beta}[\varphi])''(z)}
{(C_{\alpha \beta}[\varphi])'(z)}\bigg| 
\leq (1-|z|^2) |\alpha|\left( \bigg|\frac{z\varphi'(z)}{\varphi(z)}-1\bigg|
+\bigg|\frac{\beta z}{1-z}\bigg|\right). 
$$
If $\varphi \in \mathcal{S}$, then Theorem~9 of \cite[p~69]{Goo83} gives that
$\Big|\cfrac{z\varphi'(z)}{\varphi(z)}-1\Big|\leq 2/(1-|z|)$ and so
it follows that
$$
(1-|z|^2)\bigg|\frac{z(C_{\alpha \beta}[\varphi])''(z)}
{(C_{\alpha \beta}[\varphi])'(z)}\bigg| 
\leq |\alpha| \Big(2(1+|z|)+|\beta|(1+|z|)\Big)< 2|\alpha|(2+|\beta|).
$$
Now, by the Becker criterion  \cite{Bec72} for the univalence of an analytic function (see also \cite[Theorem~6.7,~p. 172]{Pom75} and \cite[Theorem~3.3.1,~p. 130]{GK03}), 
$C_{\alpha\beta}[\varphi]$ is univalent in $\mathbb{D}$ provided 
$2|\alpha|(2+|\beta|)\leq 1$ and hence the result follows.
\end{proof}

\begin{remark}
We assume that the bound for $\alpha$ in Theorem~\ref{thm3.1P1} may be improved further, however, for $\alpha, \beta$ satisfying $|\alpha|(2+|\beta|)\geq 2$, we ensure the existence of a function $\varphi \in \mathcal{S}$ such that $C_{\alpha \beta}[\varphi]\notin \mathcal{S}$. This can be seen by considering the Koebe function $\varphi(z)=z/(1-z)^2,\, z\in \mathbb{D}$. Indeed, the corresponding integral transform 
$$
C_{\alpha \beta}[\varphi(z)]=\int_{0}^{z} (1-\zeta)^{-\alpha(2+\beta)}\, d\zeta
$$
is trivially not univalent for $-\alpha(2+\beta)=2$.
\end{remark}

\begin{remark}
For the choice $\beta=0$, Theorem~\ref{thm3.1P1} is equivalent to \cite[Theorem~3]{KM72}.
As a consequence of Theorem~\ref{thm3.1P1}, one may generate a
number of integral transforms that are indeed univalent. 
\end{remark}

Our next purpose is to construct harmonic mappings corresponding to 
the integral transforms $C_{\alpha\beta}$ through shear construction.
From the algorithm described in Section~1, we require to show that
$C_{\alpha\beta}$ is CHD. 

\begin{definition}
	A domain $D \subset \mathbb{C}$ is called {\em convex in the direction} $\theta \,(0\leq \theta <\pi)$ if every line parallel to the line through $0$ and $e^{i\theta}$ has a connected or empty intersection with $D$. A univalent harmonic mapping $f$ in $D$ is said to be {\em convex in the direction} $\theta$ if $f(D)$ is convex in the direction $\theta$. The case $\theta=0$ corresponds to CHD.
\end{definition}

\begin{theorem}\label{thm3.4P1}
	If $\varphi \in \mathcal{S^*(\delta)}$, then $C_{\alpha \beta}[\varphi]$ is convex in one direction in $\mathbb{D}$ for all $\alpha,\beta\geq 0$ satisfying $\alpha(\beta+2(1-\delta))\leq 3$.
\end{theorem}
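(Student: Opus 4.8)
The plan is to verify directly that the analytic function $F:=C_{\alpha\beta}[\varphi]$ maps $\mathbb{D}$ onto a domain convex in one direction by controlling the argument of its derivative. First I would record that, from the definition in~(\ref{Eq1.1P1}), $F'(z)=\left(\varphi(z)/z\right)^{\alpha}(1-z)^{-\alpha\beta}$, which is non-vanishing on $\mathbb{D}$; hence $F$ is locally univalent and $\log F'$ admits a single-valued branch on $\mathbb{D}$ normalised by $\arg F'(0)=0$. The strategy then rests on the criterion for convexity in one direction used for the Ces\`aro-type transform in~\cite{PSS19} (in the spirit of the Clunie--Sheil-Small shear theory~\cite{CS84}): for a locally univalent $F$ it suffices that the continuous branch of $\arg F'(z)$ remain in the interval $(-3\pi/2,3\pi/2)$ for all $z\in\mathbb{D}$. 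Thus the whole problem reduces to the single angular estimate $|\arg F'(z)|<3\pi/2$.

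Next I would bound the two factors of $F'$ separately. Because $\varphi\in\mathcal{S}^*(\delta)$, the subordination $\varphi(z)/z\prec(1-z)^{-2(1-\delta)}$ holds, the majorant being $k_\delta(z)/z$ for the extremal function $k_\delta(z)=z/(1-z)^{2(1-\delta)}$. Since $1-z$ takes values in the right half-plane, $|\arg(1-z)|<\pi/2$, so the image of $(1-z)^{-2(1-\delta)}$ lies in the sector $|\arg w|<(1-\delta)\pi$; by containment of images under subordination, $|\arg(\varphi(z)/z)|<(1-\delta)\pi$, whence $|\arg(\varphi(z)/z)^{\alpha}|<\alpha(1-\delta)\pi$ for $\alpha\ge 0$. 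For the remaining factor, $\arg(1-z)^{-\alpha\beta}=-\alpha\beta\arg(1-z)$ gives $|\arg(1-z)^{-\alpha\beta}|<\alpha\beta\pi/2$. Adding these along the common branch yields
\[
|\arg F'(z)|<\alpha(1-\delta)\pi+\frac{\alpha\beta\pi}{2}=\frac{\alpha\pi}{2}\bigl(\beta+2(1-\delta)\bigr)\le\frac{3\pi}{2},
\]
the last inequality being precisely the hypothesis $\alpha(\beta+2(1-\delta))\le 3$. This places $\arg F'$ in the required interval, so the criterion gives that $F$ is convex in one direction.

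The main obstacle I anticipate is isolating and correctly invoking the criterion for convexity in one direction, since the constant $3$ is exactly the threshold $|\arg F'|<3\pi/2$ and the argument count carries no slack. Two points need care: (i) the subordination bound $|\arg(\varphi/z)|<(1-\delta)\pi$ must be applied through set-containment of images rather than through convexity of the sector, which fails when $\delta<1/2$; and (ii) the two argument contributions must be summed along one and the same single-valued branch of $\log F'$. A secondary point is the equality case $\alpha(\beta+2(1-\delta))=3$, where the strict inequalities above still deliver $|\arg F'(z)|<3\pi/2$ at every interior point, so no separate limiting argument is required. The routine differentiation of $C_{\alpha\beta}[\varphi]$ and the fact that $k_\delta$ realises the subordination are standard and can be quoted.
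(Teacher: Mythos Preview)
Your argument bounds $|\arg(\varphi(z)/z)|<(1-\delta)\pi$ and $|\arg(1-z)^{-\alpha\beta}|<\alpha\beta\pi/2$ are correct, and the inequality $|\arg F'(z)|<3\pi/2$ does follow under the hypothesis. The fatal problem is the criterion you invoke: the claim that $|\arg F'(z)|<3\pi/2$ on $\mathbb{D}$ forces $F$ to be convex in one direction (or even univalent) is false. Take $F(z)=\frac{1}{4i}(e^{4iz}-1)$, so that $F(0)=0$, $F'(0)=1$, and $F'(z)=e^{4iz}$. The continuous branch of $\arg F'$ with $\arg F'(0)=0$ equals $4\,{\rm Re}\,z$, hence $|\arg F'(z)|<4<3\pi/2$ throughout $\mathbb{D}$; yet $F(\pi/4)=F(-\pi/4)=i/2$, so $F$ is not even univalent. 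A uniform bound on $\arg F'$ controls neither the total variation of the tangent direction along image curves nor the amount by which that direction may reverse, and it is the latter that governs close-to-convexity and directional convexity. No such criterion appears in~\cite{PSS19}.

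The paper proceeds differently: it bounds $1+{\rm Re}\,[zF''(z)/F'(z)]$ from below. From $F'(z)=(\varphi(z)/z)^\alpha(1-z)^{-\alpha\beta}$ one obtains
\[
1+{\rm Re}\,\frac{zF''(z)}{F'(z)}=1+\alpha\,{\rm Re}\Big[\frac{z\varphi'(z)}{\varphi(z)}-1+\frac{\beta z}{1-z}\Big]>1-\alpha(1-\delta)-\frac{\alpha\beta}{2}\ge-\frac12,
\]
the last step being exactly the assumption $\alpha(\beta+2(1-\delta))\le 3$. Umezawa's theorem~\cite[Theorem~1]{UME52} then gives convexity in one direction. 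The constant $3$ thus comes from the threshold $-1/2$ in Umezawa's pointwise condition on $1+{\rm Re}(zF''/F')$, not from a $3\pi/2$ bound on $\arg F'$.
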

\begin{proof}
	By the definition of $C_{\alpha\beta}[\varphi]$, we have
	\begin{align*}
		1+{\rm Re}\,\left[\frac{z(C_{\alpha\beta}[\varphi])''(z)}
	{(C_{\alpha\beta}[\varphi])'(z)}\right]
	& = 1+\alpha {\rm Re}\,\left[\frac{z\varphi'(z)}{\varphi(z)}-1+\frac{\beta z}{1-z}\right]\\
	& > 1-\alpha +\alpha \delta-\alpha \beta/2\ge -1/2, 
	\end{align*}
	where the last inequality holds by our assumption $\alpha(\beta+2(1-\delta))\leq 3$.
	Therefore, by using \cite[Theorem 1]{UME52}, one can conclude that $C_{\alpha \beta}[\varphi]$ is convex in one direction in $\mathbb{D}$.
\end{proof}

\medskip
The following result characterizes a function to be CHD.

\medskip
\noindent
{\bf Lemma~E} (\cite[Theorem~1]{RZ76}).
{\em Let $\varphi$ be a non-constant analytic function in $\mathbb{D}$. The function $\varphi$ is CHD if and only if there are numbers $\mu$ and $\nu$, $0\leq \mu <2\pi$ and $0\leq \nu \leq \pi$, such that
$$
{\rm Re}\{e^{i\mu}(1-2ze^{-i\mu}\cos\nu+z^2 e^{-2i\mu})\varphi'(z)\}\geq 0, \quad z\in \mathbb{D}.
$$}

\begin{remark}\label{remark3.5P1}
By Theorem~\ref{thm3.4P1} we learn that 
the operator $C_{\alpha \beta}[\varphi]$ need not be CHD under the same assumptions.
However, for all $\alpha,\beta\geq 0$ satisfying $\alpha(\beta+2(1-\delta))\leq 3$, the rotation $C^\theta_{\alpha \beta}[\varphi](z):=e^{-i\theta}C_{\alpha \beta}[\varphi](e^{i\theta}z)$ of $C_{\alpha \beta}[\varphi](z)$ will be CHD for a suitable choice of $\theta$ whenever $\varphi \in \mathcal{S}^*(\delta)$. In particular, we write $J^{\theta}_{\alpha}[\varphi](z):=e^{-i\theta}J_{\alpha }[\varphi](e^{i\theta}z)$ and $C^{\theta}_{\alpha}[\varphi](z):=e^{-i\theta}C_{\alpha}[\varphi](e^{i\theta}z)$. For instance, we here present an integral operator that is convex in one direction, but not in horizontal direction,
which becomes CHD with a suitable rotation.

For the function $\varphi(z)=z/(1-z^2)$, one can show that by Theorem~\ref{thm3.4P1},
the integral transform $J_{3/2}[\varphi](z)=\int_{0}^{z} (1-\zeta^2)^{-3/2} \, d\zeta$ is convex in one direction. At this moment we do not have any analytical proof for 
$J_{3/2}[\varphi](z)$ to be non-CHD; however the Mathematica graphics tool confirms it  (see Figure~\ref{Fig!AP!}). As a result, we now show that the rotation operator $J^{\pi/4}_{3/2}[\varphi](z)$ is CHD.
	\begin{figure}[H]
		\begin{minipage}[b]{0.45\textwidth}
			\includegraphics[width=5cm,height=5cm]{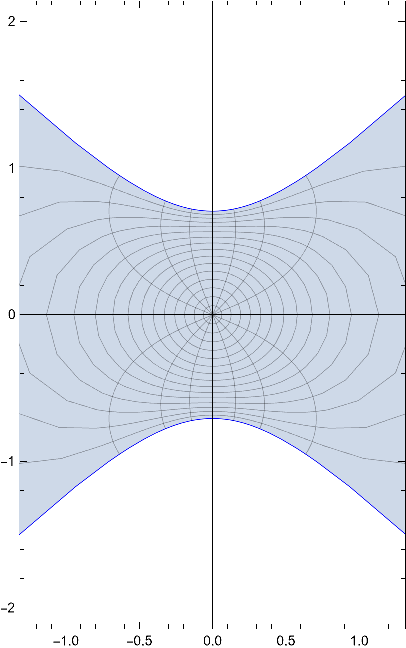}
			\hspace*{1.0cm}  non-CHD $J_{3/2}[\varphi](\mathbb{D})$
		\end{minipage}
		\begin{minipage}[b]{0.4\textwidth}
			\includegraphics[width=6cm,height=5cm]{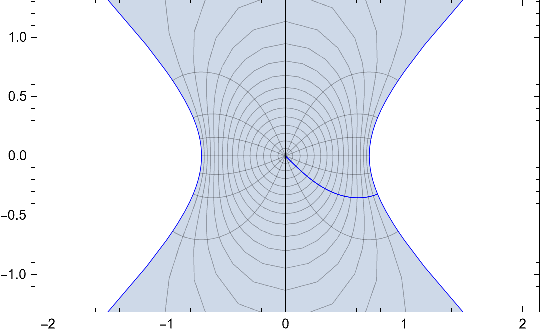}
			\hspace*{2cm} CHD $J^{\pi/4}_{3/2}[\varphi](\mathbb{D})$ 
\end{minipage}
		\caption{The images $J_{3/2}[\varphi](\mathbb{D})$
		and $J^{\pi/4}_{3/2}[\varphi](\mathbb{D})$ for $\varphi(z)=z(1-z^2)^{-1}$}\label{Fig!AP!}
	\end{figure}

Lemma~E, for the choices $\mu=\pi/4,\nu=\pi/2$, leads us in proving 
$$
{\rm Re}\{e^{i\pi/4}(1-iz^2)(J^{\pi/4}_{3/2}[\varphi])'\}={\rm Re}\{(1-iz^2)^{-1/2} \}>0.
$$
This is equivalent to proving $|\arg (1-iz^2)^{-1/2}|<\pi/2$. For this,
consider
$$
k(z)=\int_{0}^{z} (1-i\zeta^2)^{-1}\, d\zeta
$$
and we obtain
$$
1+{\rm Re}\bigg[\frac{z k''(z)}{k'(z)}\bigg]=1+2{\rm Re}\bigg[\frac{iz^2}{1-iz^2}\bigg]>0.
$$
This shows that $k(z)$ is a convex function and therefore,
one can obtain
$$
|\arg (1-iz^2)^{-1/2}|=1/2\cdot|\arg (1-iz^2)^{-1}|<\pi/2.
$$
Therefore, $J^{\pi/4}_{3/2}[\varphi](\mathbb{D})$ is CHD.
\end{remark}

We now define the corresponding harmonic mapping $F^\theta_{\alpha\beta}$ of the integral transform $C^\theta_{\alpha\beta}[\varphi]$ by using the shear construction algorithm as stated in Section~1. Theorem~\ref{thm3.4P1}
and Remark~\ref{remark3.5P1} justify the validity of the following definition:

\begin{definition}\label{Def3.5P1}
Let $\alpha, \beta\ge 0$ and $\alpha(\beta+2(\delta-1))\leq3$. Then we define $F^\theta_{\alpha\beta}(z)=H(z)+\overline{G(z)}$, with the usual normalization $H(0)=G(0)=0, H'(0)=1$ and $G'(0)=0$, as a {\em horizontal shear} of $C^{\theta}_{\alpha\beta}[\varphi](z)=H(z)-G(z)$ having its dilatation $w_{\alpha\beta}(z)=\alpha(1+\beta)w(z)$ for some analytic function $w(z)$ satisfying $|w(z)|<1$.
\end{definition}

Note that one can choose $w$ in such a way that the condition 
$|w_{\alpha\beta}(z)|<1$ is satisfied. In particular, we also use the notations
$\mathcal{F}^\theta_\alpha$ and $\mathcal{G}^\theta_\alpha$ for the 
horizontal shears of $C^\theta_{\alpha}[\varphi]$ and $J^\theta_\alpha[\varphi]$ with their dilatations $w_{\alpha1}$ and $w_{\alpha0}$, respectively.

One can take $F^\theta_{\alpha\beta}=H+\overline{G}$ as a vertical shear of the analytic function $C^\theta_{\alpha\beta}[\varphi]=H+G$ for some $\theta~(0\leq \theta <\pi)$ with the same normalization. However, this small change in the sign produces serious structural difference (see \cite[Section~3.4, p.~40]{Dur04}).

Next, we provide a counterexample to the statement that $F^\theta_{11}\in \mathcal{S}_H$,
a horizontal shear of $C^\theta[\varphi]$, while $\varphi$ ranges over the class $\mathcal{S}^*(\delta),~0\le \delta< 1$. 
This motivates us to study the univalence property of $F^\theta_{\alpha\beta}$ under
certain restrictions on the parameters $\alpha$ and $\beta$. We begin our
investigation with the counterexample followed by the main results.

\begin{example}\label{Ex3.6P1}
For $\lambda\in\mathbb{T}$, consider a locally univalent analytic function 
$\varPhi_{\lambda,\theta}=H+\lambda{G}$ in $\mathbb{D}$. Now $F^\theta_{11}=H+\overline{G}$ is a well defined sense-preserving harmonic mapping, a horizontal shear of $C^\theta[\varphi]=H-G$, with its dilatation $w_{11}=G'/H'$. Adhering to our counterexample, 
we take $\varphi(z)=z/(1-z)^2$ with $\theta=0$ and $w(z)=z/2$. For any $\lambda\in\mathbb{T}$, it is easy to see that the function $\varPhi_{\lambda, 0}=H+\lambda{G}$ satisfies
\begin{equation}\nonumber
	\varPhi'_{\lambda, 0}(z)=H'(z)\cdot [1+\lambda \,w_{11}(z)]=(C^0_{11}[\varphi])'(z) 
	\cdot \frac{1+\lambda z}{1-z}.
\end{equation}
Thus, for all $z\in \mathbb{D}$ and for all $\lambda\in\mathbb{T}$, we compute 
$$
(1-|z|^2)\bigg|\frac{\varPhi''_{\lambda, 0}(z)}{\varPhi'_{\lambda, 0}(z)}\bigg|=(1-|z|^2)\bigg|\frac{4}{1-z}+\frac{\lambda}{1+\lambda z}\bigg|.
$$
By choosing $z=1/2$ and $\lambda=1$, we notice that
$$
\sup_{z\in \mathbb{D}}(1-|z|^2)\bigg|\frac{\varPhi''_{\lambda, 0}(z)}{\varPhi'_{\lambda, 0}(z)}\bigg|\geq \frac{26}{4}>6,
$$
which contradicts the well-known univalence criteria (an immediate consequence of \cite[Theorem 2.4]{Dur83}). Therefore, $\varPhi_{1,0}=H+ G$ is not univalent. It follows by Lemma~D that $F^\theta_{11}\notin \mathcal{S}_{\mathbb{H}}$. 
The graph in relation to the non-univalency of $F^\theta_{11}$ for $\varphi(z)=z/(1-z)^2$ is also shown in Figure~\ref{Fig2P1}.

\begin{figure}[H]
\begin{center}
\includegraphics[width=6.5cm]{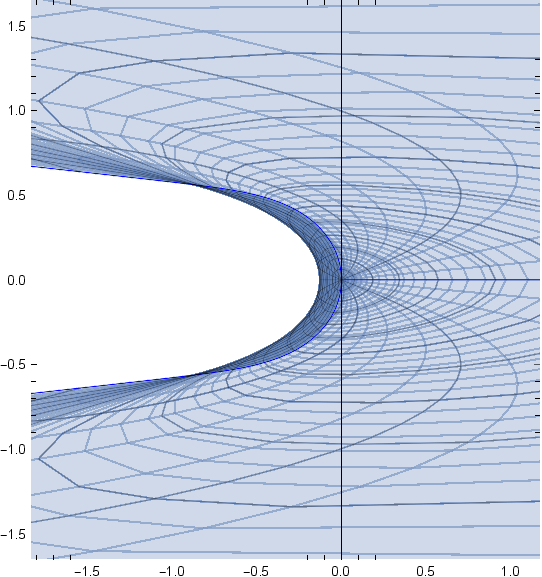}	
\caption{Image of $\mathbb{D}$ under $F_{11}$}\label{Fig2P1}
\end{center}
\end{figure}
\end{example}
 
In what follows, our first main result provides conditions on $\alpha$ and $\beta$ 
for which $F^\theta_{\alpha\beta}$, with its dilatation $w_{\alpha\beta}$, 
is univalent whenever $\varphi$ is a
starlike function of order $\delta,~0\le \delta<1$. 
For this purpose, we use the idea of linearly connected domains.

\begin{theorem}\label{Thm3.7P1}
	Let $\varphi\in\mathcal{S}^*(\delta)$, and $F^\theta_{\alpha\beta}=H+\overline{G}$ be a sense-preserving harmonic mapping in $\mathbb{D}$ with dilatation $w_{\alpha\beta}$. Then for all non-negative parameters $\alpha,\beta$ such that $\alpha(\beta+2(1-\delta))\leq 2$ with 
	$\alpha(1+\beta) \|w\| <1/3$, the corresponding $F^\theta_{\alpha \beta}$ is univalent in $\mathbb{D}$.
\end{theorem}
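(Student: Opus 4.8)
The plan is to upgrade the convexity information behind Theorem~\ref{thm3.4P1} and feed it into a univalence criterion for harmonic shears of linearly connected domains. First I would observe that, under the sharper hypothesis $\alpha(\beta+2(1-\delta))\le 2$, the analytic map $\Phi:=C^\theta_{\alpha\beta}[\varphi]=H-G$ is actually convex (not merely convex in one direction). Indeed, the quantity $1+{\rm Re}\,[z\Phi''(z)/\Phi'(z)]$ is unaffected by the rotation defining $C^\theta_{\alpha\beta}$, so the computation of Theorem~\ref{thm3.4P1} gives, for $\varphi\in\mathcal{S}^*(\delta)$ and $\alpha,\beta\ge 0$,
\[
1+{\rm Re}\,\Big[\frac{z\Phi''(z)}{\Phi'(z)}\Big]>1-\frac{\alpha}{2}\big(\beta+2(1-\delta)\big)\ge 0,
\]
the last inequality being exactly the assumption $\alpha(\beta+2(1-\delta))\le 2$. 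Hence $\Phi$ maps $\mathbb{D}$ univalently onto a convex domain.

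The geometric input is then that a convex domain is linearly connected with connectivity constant $M=1$: any two of its points are joined by the straight segment lying inside it, whose length equals their Euclidean distance. Since the rotation $z\mapsto e^{-i\theta}\Phi(e^{i\theta}z)$ is a rigid motion, this value $M=1$ is unaffected by the parameter $\theta$.

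The core step is a quantitative univalence estimate in the spirit of \cite{ABHSV20} and of the linearly connected domain criterion of Chuaqui--Hern\'andez. Writing $\omega:=w_{\alpha\beta}=G'/H'$, the shear relations $H'-G'=\Phi'$ and $G'=\omega H'$ give $H'=\Phi'/(1-\omega)$ and $G'=\omega\Phi'/(1-\omega)$. For $z_1\ne z_2$ I would integrate $H'$ and $G'$ along the $\Phi$-preimage of the segment $\gamma$ from $\Phi(z_2)$ to $\Phi(z_1)$; using $\tfrac{1}{1-\omega}=1+\tfrac{\omega}{1-\omega}$ this produces the identity
\[
F^\theta_{\alpha\beta}(z_1)-F^\theta_{\alpha\beta}(z_2)=\big(\Phi(z_1)-\Phi(z_2)\big)+2\,{\rm Re}\int_\gamma \frac{\omega}{1-\omega}\,dw.
\]
With $k:=\|w_{\alpha\beta}\|=\alpha(1+\beta)\|w\|$ and ${\rm length}(\gamma)=|\Phi(z_1)-\Phi(z_2)|$ (this is where $M=1$ enters), one bounds the integral by $\tfrac{k}{1-k}|\Phi(z_1)-\Phi(z_2)|$ and obtains
\[
\big|F^\theta_{\alpha\beta}(z_1)-F^\theta_{\alpha\beta}(z_2)\big|\ge \big|\Phi(z_1)-\Phi(z_2)\big|\Big(1-\frac{2k}{1-k}\Big),
\]
which is strictly positive exactly when $k<1/3$. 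Since $\Phi$ is univalent and the hypothesis gives $\alpha(1+\beta)\|w\|<1/3$, the right-hand side is nonzero for $z_1\ne z_2$, proving that $F^\theta_{\alpha\beta}$ is univalent.

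I expect the main obstacle to be this last passage rather than the convexity step: one must set up the segment-pullback argument carefully and verify that the threshold $1/(2M+1)$ arising from a linearly connected domain of constant $M$ specializes, for the convex image ($M=1$), to the stated bound $1/3$. By contrast, the convexity estimate is only a routine tightening of Theorem~\ref{thm3.4P1}, and the sense-preserving hypothesis is automatic since $\|w_{\alpha\beta}\|<1/3<1$.
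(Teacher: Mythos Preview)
Your proof is correct and follows essentially the same approach as the paper: both first show that $C^\theta_{\alpha\beta}[\varphi]$ is convex under the hypothesis $\alpha(\beta+2(1-\delta))\le 2$, observe that convex images are $1$-linearly connected, and then invoke the Chuaqui--Hern\'andez type criterion that the horizontal shear is univalent whenever the dilatation norm is below $1/(2M+1)=1/3$. The only difference is cosmetic: the paper cites this last step as Lemma~7 of \cite{ABHSV20}, whereas you have written out the segment-pullback estimate explicitly.
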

\begin{proof}
	Let $F^\theta_{\alpha \beta}=H+\overline{G}$
	be a sense-preserving harmonic mapping, which is a horizontal shear of $C^\theta_{\alpha\beta}[\varphi]$.
	We have
\begin{align*}
	1+{\rm Re}\,\left[\frac{z(C^\theta_{\alpha\beta}[\varphi])''(z)}
	{(C^\theta_{\alpha\beta}[\varphi])'(z)}\right]
	& = 1+\alpha {\rm Re}\,\left[\frac{z e^{i\theta}\varphi'(ze^{i\theta})}{\varphi(ze^{i\theta})}-1+\frac{\beta ze^{i\theta}}{1-ze^{i\theta}}\right]\\
	& = 1+\alpha {\rm Re}\,\left[\frac{\zeta \varphi'(\zeta)}{\varphi(\zeta)}-1+\frac{\beta \zeta}{1-\zeta}\right], \quad \zeta=e^{i\theta}z\\
	& > 1-\alpha +\alpha \delta-\alpha \beta/2\ge 0, 
\end{align*}
	where the last inequality holds by our assumption.
	Therefore, $C^\theta_{\alpha\beta}[\varphi]$ is a convex function and so
	$C^\theta_{\alpha\beta}[\varphi](\mathbb{D})$ is a $1$-linearly connected domain; see for instance \cite{CH07,Pom92}. 
Using Lemma~7 of \cite{ABHSV20}, we conclude that $F^\theta_{\alpha \beta}$ is univalent for $\alpha(1+\beta) \|w\|<1/3$.
\end{proof}

\begin{remark}
Since $\mathcal{K}\subset \mathcal{S}^*(1/2)$, Theorem~\ref{Thm3.7P1} is also valid whenever $\varphi$ is a convex function.
\end{remark}

We have a couple of immediate consequences of Theorem~\ref{Thm3.7P1} which 
give the univalency of $\mathcal{G}^\theta_\alpha$ and $\mathcal{F}^\theta_\alpha$.

\begin{corollary}
	Let $\varphi\in\mathcal{K}$, and $\mathcal{G}^\theta_{\alpha}=H+\overline{G}$ be a horizontal shear of $J^\theta_{\alpha}[\varphi]$ with dilatation $w_{\alpha 0}$ in $\mathbb{D}$. Then for all $\alpha\in [0,2]$ with $\alpha\|w\|<1/3$, the mapping $\mathcal{G}^\theta_{\alpha}$ is univalent in $\mathbb{D}$.
\end{corollary}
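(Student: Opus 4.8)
The plan is to obtain this corollary as a direct specialization of Theorem~\ref{Thm3.7P1}; essentially no new work is needed beyond invoking the classical Marx--Strohh\"acker inclusion $\mathcal{K}\subset\mathcal{S}^*(1/2)$, which is precisely the content of the remark following Theorem~\ref{Thm3.7P1}. This inclusion lets me replace the hypothesis $\varphi\in\mathcal{K}$ by $\varphi\in\mathcal{S}^*(\delta)$ with $\delta=1/2$, so that the machinery of Theorem~\ref{Thm3.7P1} becomes available.

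First I would record the parameter identifications forced by the definitions. Since $C_{\alpha 0}[\varphi]=J_{\alpha}[\varphi]$, the rotation satisfies $C^{\theta}_{\alpha 0}[\varphi]=J^{\theta}_{\alpha}[\varphi]$, and hence $\mathcal{G}^{\theta}_{\alpha}$ is exactly $F^{\theta}_{\alpha 0}$ in the notation of Definition~\ref{Def3.5P1}. Moreover, the dilatation $w_{\alpha\beta}(z)=\alpha(1+\beta)w(z)$ reduces, for $\beta=0$, to $w_{\alpha 0}(z)=\alpha w(z)$, matching the dilatation named in the statement. Thus $\mathcal{G}^{\theta}_{\alpha}$ is precisely the horizontal shear to which Theorem~\ref{Thm3.7P1} applies in the case $\beta=0$.

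Next I would substitute $\beta=0$ and $\delta=1/2$ into the two hypotheses of Theorem~\ref{Thm3.7P1}. The constraint $\alpha(\beta+2(1-\delta))\leq 2$ becomes $\alpha\bigl(0+2(1-1/2)\bigr)=\alpha\leq 2$, that is, $\alpha\in[0,2]$, and the constraint $\alpha(1+\beta)\|w\|<1/3$ becomes $\alpha\|w\|<1/3$. These are exactly the hypotheses assumed in the corollary, so Theorem~\ref{Thm3.7P1} immediately yields that $\mathcal{G}^{\theta}_{\alpha}$ is univalent in $\mathbb{D}$.

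Since the argument is pure specialization, there is no substantive obstacle; the only point worth verifying is that the sense-preserving hypothesis underlying Theorem~\ref{Thm3.7P1} holds. This follows because $|w_{\alpha 0}(z)|=\alpha|w(z)|\leq\alpha\|w\|<1/3<1$ for all $z\in\mathbb{D}$ under the stated bound, so $F^{\theta}_{\alpha 0}=\mathcal{G}^{\theta}_{\alpha}$ is a well-defined sense-preserving harmonic mapping and the theorem is genuinely applicable.
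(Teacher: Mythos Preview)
Your proposal is correct and follows essentially the same approach as the paper: the corollary is obtained as an immediate specialization of Theorem~\ref{Thm3.7P1} by invoking $\mathcal{K}\subset\mathcal{S}^*(1/2)$ and setting $\beta=0$, $\delta=1/2$, exactly as the remark preceding the corollary suggests.
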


\begin{corollary}
	Let $\varphi\in\mathcal{K}$, and $\mathcal{F}^\theta_{\alpha}=H+\overline{G}$ be a horizontal shear of $C^\theta_{\alpha}[\varphi]$ with dilatation $w_{\alpha 1}$ in $\mathbb{D}$. 
	Then for all $\alpha\in [0,1]$ with $\alpha\|w\|<1/6$,
	the mapping $\mathcal{F}^\theta_{\alpha}$ is univalent in $\mathbb{D}$.
\end{corollary}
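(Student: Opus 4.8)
The plan is to derive this as a direct specialization of Theorem~\ref{Thm3.7P1}, which already handles the horizontal shear $F^\theta_{\alpha\beta}$ of $C^\theta_{\alpha\beta}[\varphi]$ for $\varphi\in\mathcal{S}^*(\delta)$. First I would observe that $\mathcal{F}^\theta_\alpha$ is by definition the horizontal shear of $C^\theta_\alpha[\varphi]=C^\theta_{\alpha 1}[\varphi]$, so this is exactly the case $\beta=1$. Since $\varphi\in\mathcal{K}$ and $\mathcal{K}\subset\mathcal{S}^*(1/2)$ (as noted in the remark following Theorem~\ref{Thm3.7P1}), I may take $\delta=1/2$.

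Next I would verify that the two hypotheses of Theorem~\ref{Thm3.7P1} reduce to the stated ones when $\beta=1$ and $\delta=1/2$. For the first condition, $\alpha(\beta+2(1-\delta))\le 2$ becomes
$$
\alpha\bigl(1+2(1-\tfrac12)\bigr)=\alpha(1+1)=2\alpha\le 2,
$$
which is precisely the restriction $\alpha\in[0,1]$ together with $\alpha\ge 0$ from the standing assumption $\alpha,\beta\ge 0$. For the second condition, $\alpha(1+\beta)\|w\|<1/3$ becomes $\alpha(1+1)\|w\|=2\alpha\|w\|<1/3$, i.e. $\alpha\|w\|<1/6$, matching the dilatation bound in the statement. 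Note also that the dilatation $w_{\alpha 1}=\alpha(1+1)w=2\alpha w$ is exactly the instance $\beta=1$ of $w_{\alpha\beta}=\alpha(1+\beta)w$ from Definition~\ref{Def3.5P1}, so $\mathcal{F}^\theta_\alpha$ indeed coincides with $F^\theta_{\alpha 1}$.

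Having checked that all hypotheses of Theorem~\ref{Thm3.7P1} are met with $\beta=1$, $\delta=1/2$, I would conclude immediately that $\mathcal{F}^\theta_\alpha$ is univalent in $\mathbb{D}$, completing the proof. The argument is essentially a bookkeeping substitution, so there is no substantial obstacle; the only point requiring care is confirming the algebraic reductions of both inequalities and, in particular, making sure the sense-preserving condition $|w_{\alpha 1}(z)|=2\alpha|w(z)|<1$ needed to apply the parent theorem is guaranteed by the assumption $\alpha\|w\|<1/6$, since $2\alpha\|w\|<1/3<1$. This is the same mechanical specialization used to obtain the preceding corollary for $\mathcal{G}^\theta_\alpha$, now with $\beta=1$ rather than $\beta=0$.
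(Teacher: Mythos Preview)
Your proposal is correct and follows exactly the approach the paper takes: the corollary is stated as an immediate consequence of Theorem~\ref{Thm3.7P1}, and you have carried out precisely the specialization $\beta=1$, $\delta=1/2$ (using $\mathcal{K}\subset\mathcal{S}^*(1/2)$) that reduces the theorem's hypotheses to $\alpha\in[0,1]$ and $\alpha\|w\|<1/6$.
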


Next we focus on the univalence of $F^\theta_{\alpha\beta}$ in terms of harmonic pre-Schwarzian derivative, where Lemma~A plays a crucial role. For this, a simplified version of the pre-Schwarzian derivative of 
$F^\theta_{\alpha\beta}$ is required. Indeed, by using \eqref{Eq2.1P1}, a direct calculation 
shows that the pre-Schwarzian derivative of $F^\theta_{\alpha\beta}$ is obtained as 
\begin{align}\label{Eq3.1P1}
	 P_{F^\theta_{\alpha \beta}}(z) & =\alpha \Big[\frac{e^{i\theta}\varphi'(ze^{i\theta})}{\varphi(ze^{i\theta})}
	-\frac{1}{z}+\frac{\beta e^{i\theta}}{1-e^{i\theta}z}\\
\nonumber	& \hspace*{0.9cm}+(1+\beta) w'(z)\Big(\frac{1-\overline{\alpha(1+\beta) w(z)}}
	{(1-\alpha(1+\beta) w(z))(1-|\alpha(1+\beta)|^2|w(z)|^2)}\Big)\Big].   
\end{align}

For the sake of convenience, we define the following notation.
Using the classical Schwarz-Pick lemma, we observe that 
\begin{equation}\label{Eq3.2P1}
	\| w^{*} \|= \sup_{z\in \mathbb{D}} \frac{|w^{'}(z)|(1-|z|^{2})}{1-|w|^{2}}\leq 1,
\end{equation}
where $\Vert w^{*} \Vert$ is called the {\em hyperbolic norm} of 
 $w(z)$.

Thus, we have

\begin{theorem}
Let $F^\theta_{\alpha\beta}=H+\overline{G}$ be a sense-preserving harmonic mapping in $\mathbb{D}$ with dilatation $w_{\alpha\beta}$. If $\varphi\in \mathcal{L}(\gamma)$, 
	then 
		\begin{enumerate}
		\item[\bf (i)] for $\beta\geq 1$, $F^\theta_{\alpha \beta}\in \mathcal{S}_{\mathbb{H}}$ for all non-negative values of $\alpha$ satisfying
		\begin{equation}\label{Eq3.3P1}
			\alpha\leq \frac{1}{2\gamma+2\beta+(1+\beta)\,\Vert w^{*} \Vert(1+\Vert w \Vert)]}.
		\end{equation}
		\item[\bf (ii)] for $0\leq \beta<1$, two cases arise. 
		\begin{enumerate}
			\item[\bf (a)] If $(\beta+2(1+\beta)\,\Vert w^{*} \Vert (1+\Vert w \Vert))\le 2(1-\beta)$, then $F^\theta_{\alpha \beta}\in \mathcal{S}_{\mathbb{H}}$ for all non-negative values of $\alpha$ satisfying
			\begin{equation}\label{Eq3.4P1}
				\alpha\leq \frac{4(1-\beta)}{\Big[4(2\gamma+1)(1-\beta)+(\beta+(1+\beta)\,\| w^{*} \|(1+\|w\|))^2+4(1-\beta^2)\| w^{*} \|\Big]}.
			\end{equation}
			\item[\bf (b)] If $(\beta+2(1+\beta)\,\Vert w^{*} \Vert (1+\Vert w \Vert))> 2(1-\beta)$, then $F^\theta_{\alpha \beta}\in \mathcal{S}_{\mathbb{H}}$ 
			for all non-negative values of $\alpha$ satisfying the inequality \eqref{Eq3.3P1}.
		\end{enumerate}
	\end{enumerate}
\end{theorem}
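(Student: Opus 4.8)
The engine of the proof is Lemma~A, the harmonic analogue of Becker's criterion, applied to $f=F^\theta_{\alpha\beta}=H+\overline{G}$ with its dilatation $w_{\alpha\beta}=\alpha(1+\beta)w$. Concretely, the plan is to show that the quantity
$$
\Phi(z):=(1-|z|^2)\,|z\,P_{F^\theta_{\alpha\beta}}(z)|+\frac{(1-|z|^2)\,|z\,w_{\alpha\beta}'(z)|}{1-|w_{\alpha\beta}(z)|^2}
$$
satisfies $\Phi(z)\le 1$ for every $z\in\mathbb{D}$, since this forces $F^\theta_{\alpha\beta}\in\mathcal{S}_{\mathbb{H}}$. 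I would begin from the explicit expression \eqref{Eq3.1P1} for $P_{F^\theta_{\alpha\beta}}$, multiply through by $z$, and apply the triangle inequality so as to split $z\,P_{F^\theta_{\alpha\beta}}$ into three groups: the \emph{linear-invariant group} $\alpha\,\zeta\varphi'(\zeta)/\varphi(\zeta)$ (with $\zeta=e^{i\theta}z$, so $|\zeta|=|z|$), the \emph{rational group} $\alpha(-1+\beta\zeta/(1-\zeta))$, and the \emph{dilatation group} coming from the last line of \eqref{Eq3.1P1}. A useful preliminary observation is that $|1-\overline{w_{\alpha\beta}}|=|1-w_{\alpha\beta}|$, so that the dilatation group collapses to a constant multiple of $(1-|z|^2)|z\,w'|/(1-|w_{\alpha\beta}|^2)$, which is structurally the same as the second summand of $\Phi$.

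Each group is then estimated separately. For the first group, Lemma~C gives $(1-|z|^2)\,|\zeta\varphi'(\zeta)/\varphi(\zeta)|\le 2\gamma$ directly, because $\varphi\in\mathcal{L}(\gamma)$ and $|\zeta|=|z|$. For the rational group, the elementary bound $(1-|z|^2)\,|\zeta|/|1-\zeta|\le(1+|z|)|z|<2$ is available, and it is here that the constant $-1$ must be handled: writing $r=|z|$, the combination $(1-r^2)+\beta r(1+r)$ is \emph{increasing} when $\beta\ge1$, with supremum $2\beta$ attained as $r\to1^-$, so the constant $-1$ is absorbed and the linear-invariant and rational contributions together produce the coefficient $2\gamma+2\beta$ of \eqref{Eq3.3P1}. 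For $0\le\beta<1$ this monotonicity trick fails, so instead one bounds the constant by $1$ separately (explaining the factor $2\gamma+1$ that appears in \eqref{Eq3.4P1}) and carries the $\beta$-rational term forward. For the dilatation group I would invoke \eqref{Eq3.2P1}: using $(1-|z|^2)|w'|\le\|w^{*}\|(1-|w|^2)$ together with $1-|w_{\alpha\beta}|^2=1-\alpha^2(1+\beta)^2|w|^2$ and the elementary estimate of $(1-|w|^2)/(1-\alpha^2(1+\beta)^2|w|^2)$, this contribution is controlled by an expression proportional to $\alpha(1+\beta)\|w^{*}\|(1+\|w\|)$, carrying the denominator $1-\alpha^2(1+\beta)^2\|w\|^2$.

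Assembling the three bounds reduces $\Phi(z)\le1$ to a single scalar inequality in $\alpha$, and the case structure of the statement is exactly the bifurcation in how this inequality is solved. When $\beta\ge1$, the clean supremum $2\beta$ and the permissibility of estimating the dilatation denominator by unity (justified a posteriori, since on the admissible range $\alpha(1+\beta)\|w\|<1$) make the resulting inequality linear in $\alpha$, which rearranges to \eqref{Eq3.3P1}. When $0\le\beta<1$, sub-case \textbf{(a)} the decisive step is to \emph{retain} the factor $1-\alpha^2(1+\beta)^2\|w\|^2$; clearing it and grouping the constant-plus-rational contribution with the dilatation contribution yields a solvable one-variable inequality whose closed-form solution is \eqref{Eq3.4P1}, the block $(\beta+(1+\beta)\|w^{*}\|(1+\|w\|))^2$ arising precisely from this grouping, and the sub-case hypothesis $\beta+2(1+\beta)\|w^{*}\|(1+\|w\|)\le2(1-\beta)$ being exactly what makes this refined bound the operative constraint. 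In the complementary sub-case \textbf{(b)} the refinement gains nothing sharper than the crude estimate, so the linear bound \eqref{Eq3.3P1} again governs.

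The main obstacle I anticipate is the interaction between the $\alpha^2$-dependence concealed in $1-|w_{\alpha\beta}|^2$ and the supremum over $r=|z|$: unlike the earlier theorems of this section, one cannot here simply discard the dilatation denominator without sacrificing the sharper $\alpha$-range, so the estimates must be organized to produce an inequality admitting the explicit closed forms \eqref{Eq3.3P1} and \eqref{Eq3.4P1} rather than a bound buried in radicals. A secondary but essential bookkeeping point is to maintain $|w_{\alpha\beta}(z)|<1$ throughout, so that $F^\theta_{\alpha\beta}$ stays sense-preserving and Lemma~A genuinely applies; this is what ties the admissible range of $\alpha$ to $\|w\|$ and forces the analysis to respect $\alpha(1+\beta)\|w\|<1$.
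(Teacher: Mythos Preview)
Your overall strategy---apply Lemma~A to $F^\theta_{\alpha\beta}$, split $zP_{F^\theta_{\alpha\beta}}$ via \eqref{Eq3.1P1} into the linear-invariant, rational, and dilatation pieces, and bound each separately using Lemma~C and \eqref{Eq3.2P1}---is exactly the paper's approach, and your treatment of case~(i) is equivalent to theirs.

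The gap is in case~(ii). You locate the bifurcation in the $\alpha^2$-dependence hidden in $1-|w_{\alpha\beta}|^2$, and propose to \emph{retain} that denominator in sub-case~(a) so as to solve a quadratic inequality in $\alpha$. This is not what produces \eqref{Eq3.4P1}, and following your plan would not recover that bound. In the paper's argument the $\alpha^2$-denominator is discarded at the outset in \emph{every} case: since $\alpha(1+\beta)\le 1$ one has $(1-|w|^2)/(1-\alpha^2(1+\beta)^2|w|^2)\le 1$, so the dilatation contribution to $(1-|z|^2)|zP_{F^\theta_{\alpha\beta}}|$ is bounded by $\alpha(1+\beta)\|w^*\|\|w\|\,|z|$. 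The upshot is a single estimate
\[
(1-|z|^{2})|zP_{F^\theta_{\alpha\beta}}(z)|\le \alpha\Big[2\gamma+1+(\beta-1)r^{2}+\big(\beta+(1+\beta)\|w^{*}\|\|w\|\big)r\Big],\qquad r=|z|,
\]
which is \emph{linear in $\alpha$} but quadratic in $r$. The entire case structure of the theorem is then the elementary dichotomy for maximizing this quadratic over $r\in[0,1)$: when $\beta\ge 1$ the coefficient of $r^2$ is non-negative and the supremum is at $r=1$; when $0\le\beta<1$ the parabola is concave and its vertex sits at $r_0=\big(\beta+(1+\beta)\|w^*\|\|w\|\big)/\big(2(1-\beta)\big)$, so sub-case~(a) is precisely $r_0\le 1$ (maximum at the vertex, yielding the $4(1-\beta)$ and the squared block in \eqref{Eq3.4P1}) and sub-case~(b) is $r_0>1$ (maximum again at $r=1$, reverting to \eqref{Eq3.3P1}). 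The extra summand $4(1-\beta^2)\|w^*\|$ in \eqref{Eq3.4P1} is just the second term of Lemma~A, bounded by $\alpha(1+\beta)\|w^*\|$ and then put over the common denominator $4(1-\beta)$. In short: there is no quadratic in $\alpha$ to solve, and the ``main obstacle'' you anticipate does not arise.
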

\begin{proof}
Note that, by Lemma~C and \eqref{Eq3.1P1}, for all $z\in \mathbb{D}$ we estimate
\begin{align*}
	(1-|z|^{2})|z P_{F^\theta_{\alpha \beta}}(z)|
	& =(1-|z|^{2})\alpha\left|\frac{z e^{i\theta}\varphi'(z e^{i\theta})}{\varphi(z e^{i\theta})}-1
	+\frac{\beta z e^{i\theta}}{1-z e^{i\theta}}\right.\\
&\hspace*{3cm}	\left.+\frac{z(1+\beta)w'(z)(1-\overline{\alpha(1+\beta) w(z)})}
	{(1-\alpha(1+\beta) w(z))(1-(\alpha(1+\beta))^2|w(z)|^2)}\right|\\
	& \leq \alpha \left[(1-|z|^{2})\Big|\frac{ze^{i\theta}\varphi'(ze^{i\theta})}{\varphi(ze^{i\theta})}\Big|+1-|z|^2+\beta|z|(1+|z|)\right.\\
	& \hspace*{5cm}\left.+\frac{(1-|z|^{2})(1+\beta)|w'(z)||z|}{1-(\alpha(1+\beta))^2|w(z)|^2}\right]\\
	& \leq \alpha \left[2\gamma+1+(\beta-1)|z|^{2}+\left(\beta+(1+\beta)\Vert w^{*} \Vert\Vert w \Vert\right)|z|\right].
\end{align*}
To find the supremum of the right-hand expression, we consider
two cases: 
\begin{enumerate}
	\item[\bf (i)] The case $\beta\geq 1$. 
	
	In this case, the maximum value of the right-hand expression 
	holds trivially for $|z|=1$. 
	This implies that
	$$
	(1-|z|^{2})|z P^\theta_{F_{\alpha \beta}}(z)|
	\leq \alpha [2\gamma+2\beta+(1+\beta)\,\Vert w^{*} \Vert\Vert w \Vert].
	$$
	Thus, we compute
	$$
	(1-|z|^{2})|z P_{F^\theta_{\alpha \beta}}(z)|+\frac{|z w_{\alpha\beta}^{'}(z)|(1-|z|^{2})}{1-|w_{\alpha\beta}(z)|^{2}}\leq \alpha [2\gamma+2\beta+(1+\beta)\,\Vert w^{*} \Vert(1+\Vert w \Vert)].
	$$
	It follows from Lemma~A that $F^\theta_{\alpha \beta}$ is univalent in $\mathbb{D}$, if
	$\alpha$ and $\beta$ satisfy the bound given in \eqref{Eq3.3P1}.
	
	\item[\bf (ii)] The case $\beta< 1$.
	
	Clearly, the maximum value of the right-hand expression
	is attained for 
	$$|z|=\frac{1}{2(1-\beta)}(\beta+(1+\beta)\,\Vert w^{*} \Vert \Vert w \Vert).
	$$
	The supremum quantity is discussed through two subcases, namely,
	\begin{enumerate}
		\item[\bf (a)] The subcase $(\beta+(1+\beta)\,\Vert w^{*} \Vert \Vert w \Vert)
		\le 2(1-\beta)$.
		
		For this case, we have
		$$
		(1-|z|^{2})|z P_{F^\theta_{\alpha \beta}}(z)|
		\leq \frac{\alpha}{4(1-\beta)}\Big[4(2\gamma+1)(1-\beta)+(\beta+(1+\beta)\,\| w^{*} \|(1+\|w\|))^2\Big],
		$$
		and thus,
		\begin{align*}
			&(1-|z|^{2})|z P^\theta_{F_{\alpha \beta}}|+\frac{|z w_{\alpha\beta}^{'}(z)|(1-|z|^{2})}{1-|w_{\alpha\beta}(z)|^{2}}\\
			&\hspace*{0.2cm} \leq \frac{\alpha}{4(1-\beta)}\Big[4(2\gamma+1)(1-\beta)+(\beta+(1+\beta)\,\| w^{*} \|(1+\|w\|))^2\\
			&\hspace*{5cm} +4(1-\beta^2)\| w^{*} \|\Big].
		\end{align*}
		Again using Lemma~A, we conclude that $F^\theta_{\alpha \beta}$ is univalent in $\mathbb{D}$ whenever $\alpha$ satisfies 
		the inequality \eqref{Eq3.4P1}.
		
		\item[\bf (b)] The subcase $(\beta+2(1+\beta)\Vert w^{*} \Vert (1+\Vert w \Vert))
		> 2(1-\beta)$. 
		
		Trivially, the maximum value of the right-hand expression holds for 
		$|z|=1$. Similarly, as an application of Lemma~A, it then follows that
		$F^\theta_{\alpha \beta}$ is univalent in $\mathbb{D}$ whenever $\alpha$ and $\beta$ satisfy 
		the inequality \eqref{Eq3.3P1}.
	\end{enumerate}
\end{enumerate}
This completes the proof.
\end{proof}

The  concludes the univalence properties of $F^\theta_{\alpha\beta}$ for $\varphi$ that belong to specific subclasses of $\mathcal{S}$.


\section{Stable harmonic univalence properties}
This section deals with the stable harmonic univalence properties of $F^\theta_{\alpha\beta}$.
It is evident that $\mathcal{SHU}\subsetneq \mathcal{S}_{\mathbb{H}}$. 
As demonstrated in 
Example~\ref{Ex3.6P1}, $F^\theta_{11}\not\in\mathcal{S}_{\mathbb{H}}$ and 
hence $F^\theta_{11}\not\in\mathcal{SHU}$. 
Therefore, it is also important 
to study the stable harmonic univalence properties of $F^\theta_{\alpha\beta}$.
In fact, our findings show that the conditions on $\alpha$ and $\beta$ alter in the necessary circumstances for $F^\theta_{\alpha\beta}\in\mathcal{SHU}$, just as they appeared
in the case of $F^\theta_{\alpha\beta}\in\mathcal{S}_{\mathbb{H}}$. 

Our first result determines conditions
on $\alpha$ and $\beta$ for which $F^\theta_{\alpha\beta}\in\mathcal{SHU}$ whenever $\varphi\in\mathcal{S}^*(\delta)$. 

\begin{theorem}\label{Thm4.1P1}
Let $F^\theta_{\alpha\beta}$ be a sense-preserving harmonic mapping in $\mathbb{D}$ with dilatation $w_{\alpha\beta}$. If $\varphi\in\mathcal{S}^*(\delta)$ 
then $F^\theta_{\alpha \beta}\in \mathcal{SHU}$ for all non-negative $\alpha,\beta$ satisfying 
\begin{equation}\label{Eq4.1P1}
	\alpha\leq \frac{1}{2\Big(2+\beta+(1+\beta)\,\Vert w^{*} \Vert 
		(1+\Vert w \Vert)\Big)}.   
\end{equation}
\end{theorem}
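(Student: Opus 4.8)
The plan is to reduce the stable harmonic univalence of $F^\theta_{\alpha\beta}$ to an analytic univalence statement via Lemma~D, and then to run the Becker-type estimate already used in the proof of Theorem~\ref{thm3.1P1}, now carrying along the extra contributions of the dilatation. By Lemma~D it suffices to show that the analytic function $\Psi_\lambda:=H+\lambda G$ is univalent in $\mathbb{D}$ for every $\lambda\in\mathbb{T}$. From the shear relations $H-G=C^\theta_{\alpha\beta}[\varphi]$ and $G'/H'=w_{\alpha\beta}$ of Definition~\ref{Def3.5P1} one solves $H'=(C^\theta_{\alpha\beta}[\varphi])'/(1-w_{\alpha\beta})$ and $G'=w_{\alpha\beta}H'$, so that
$$
\Psi_\lambda'=H'(1+\lambda w_{\alpha\beta})=(C^\theta_{\alpha\beta}[\varphi])'\,\frac{1+\lambda w_{\alpha\beta}}{1-w_{\alpha\beta}}.
$$
Since $F^\theta_{\alpha\beta}$ is sense-preserving we have $|w_{\alpha\beta}|<1$, whence $\Psi_\lambda'\neq 0$, and the normalization gives $\Psi_\lambda(0)=0$, $\Psi_\lambda'(0)=1$; thus the Becker criterion \cite{Bec72} applies to each $\Psi_\lambda$.

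Taking logarithmic derivatives I would write
$$
\frac{\Psi_\lambda''}{\Psi_\lambda'}=\frac{(C^\theta_{\alpha\beta}[\varphi])''}{(C^\theta_{\alpha\beta}[\varphi])'}+\frac{\lambda w_{\alpha\beta}'}{1+\lambda w_{\alpha\beta}}+\frac{w_{\alpha\beta}'}{1-w_{\alpha\beta}},
$$
and then split $(1-|z|^2)\,|z\,\Psi_\lambda''/\Psi_\lambda'|$ by the triangle inequality into one ``transform'' term and two ``dilatation'' terms. For the transform term I would reuse the estimate of Theorem~\ref{thm3.1P1} essentially verbatim: since $\mathcal{S}^*(\delta)\subset\mathcal{S}$, the Goodman bound $|\zeta\varphi'(\zeta)/\varphi(\zeta)-1|\le 2/(1-|\zeta|)$ with $\zeta=e^{i\theta}z$ gives $(1-|z|^2)\,|z(C^\theta_{\alpha\beta}[\varphi])''/(C^\theta_{\alpha\beta}[\varphi])'|\le \alpha(1+|z|)(2+\beta)\le 2\alpha(2+\beta)$, exactly as before. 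Here the order $\delta$ enters only through Theorem~\ref{thm3.4P1} and Definition~\ref{Def3.5P1}, guaranteeing that $F^\theta_{\alpha\beta}$ is a well-defined shear, and does not survive into the final bound.

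The crux is the two dilatation terms, where I expect the main obstacle. Writing $\mu:=\alpha(1+\beta)$, so that $w_{\alpha\beta}=\mu w$, I would first note that hypothesis \eqref{Eq4.1P1} forces $\alpha(1+\beta)\le \tfrac12$, hence $\mu\le 1$. Using $|1\pm\lambda w_{\alpha\beta}|\ge 1-|w_{\alpha\beta}|=1-\mu|w|$ and $|z|\le 1$, the two dilatation terms are bounded by
$$
(1-|z|^2)\left|\frac{z\lambda w_{\alpha\beta}'}{1+\lambda w_{\alpha\beta}}\right|+(1-|z|^2)\left|\frac{z\,w_{\alpha\beta}'}{1-w_{\alpha\beta}}\right|\le \frac{2\mu\,(1-|z|^2)|w'(z)|}{1-\mu|w|}.
$$
The delicate point is to keep this linear in $\mu$: converting to the hyperbolic norm via \eqref{Eq3.2P1} as $(1-|z|^2)|w'|\le\|w^{*}\|(1-|w|^2)=\|w^{*}\|(1-|w|)(1+|w|)$ and cancelling $1-|w|$ against $1-\mu|w|$ through $\mu\le1$ yields the clean bound $2\mu\|w^{*}\|(1+\|w\|)=2\alpha(1+\beta)\|w^{*}\|(1+\|w\|)$, avoiding a spurious quadratic factor in $\alpha$. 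Adding the transform term and letting $|z|\to1$ gives
$$
(1-|z|^2)\left|\frac{z\,\Psi_\lambda''(z)}{\Psi_\lambda'(z)}\right|\le 2\alpha\big[2+\beta+(1+\beta)\,\|w^{*}\|(1+\|w\|)\big].
$$
Requiring the right-hand side to be at most $1$ is exactly \eqref{Eq4.1P1}; the Becker criterion then makes every $\Psi_\lambda$ univalent, and Lemma~D concludes that $F^\theta_{\alpha\beta}\in\mathcal{SHU}$.
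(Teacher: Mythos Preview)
Your proof is correct and follows essentially the same route as the paper: reduce to analytic univalence of $H+\lambda G$ via Lemma~D, express its derivative as $(C^\theta_{\alpha\beta}[\varphi])'\,(1+\lambda w_{\alpha\beta})/(1-w_{\alpha\beta})$, split the pre-Schwarzian by the triangle inequality, bound the transform part with the Goodman estimate $|\zeta\varphi'(\zeta)/\varphi(\zeta)-1|\le 2/(1-|\zeta|)$ and the dilatation part with the hyperbolic norm identity \eqref{Eq3.2P1} together with $\alpha(1+\beta)\le 1$, then invoke Becker. The paper carries out exactly this computation (its equations \eqref{Eq4.2P1}--\eqref{Eq4.3P1}); your write-up simply makes the cancellation $(1-|w|)/(1-\mu|w|)\le 1$ more explicit than the paper does.
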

\begin{proof}
Since $\varphi\in\mathcal{S}^*(\delta)$, we have $\varphi(0)=0$ which justifies the 
local univalence of $C^\theta_{\alpha \beta}[\varphi]$ and so $F^\theta_{\alpha \beta}=H+\overline{G}$ is well-defined. 
It is easy to see that for any $\lambda\in\mathbb{T}$, the function $\varPhi_{\lambda, \theta}=H+\lambda G$ satisfies
\begin{equation}\label{Eq4.2P1}
	\varPhi'_{\lambda, \theta}(z)=H'(z)\cdot [1+\lambda w_{\alpha \beta}(z)]=(C^\theta_{\alpha \beta}[\varphi])'(z) 
	\cdot \frac{1+\lambda \alpha(1+\beta) \,w(z)}{1-\alpha(1+\beta) \,w(z)}.
\end{equation}
Hence, for all $z\in \mathbb{D}$, we have
\begin{align}\label{Eq4.3P1}
	(1-|z|^{2})\bigg|\frac{z\,\varPhi''_{\lambda, \theta}(z)}{\varPhi'_{\lambda, \theta}(z)}\bigg|
=(1-|z|^{2})\alpha \bigg|\frac{z e^{i\theta}\varphi'(z e^{i\theta})}{\varphi(ze^{i\theta})}-1+\frac{\beta ze^{i\theta}}
	{1-ze^{i\theta}}
	&+\frac{\lambda(1+\beta) z \,w'(z)}{1+\lambda (1+\beta)\alpha \,w(z)}\\
&\nonumber	+\frac{z (1+\beta)\,w'(z)}{1-\alpha(1+\beta) \,w(z)}\bigg|.   
\end{align}
Since $w(z)$ is a self-map of $\mathbb{D}$ and $|z\varphi'(z)/\varphi(z)-1| 
\leq 2/(1-|z|)$, by the classical distortion theorem for $\mathcal{S}$ and \eqref{Eq3.2P1}, 
we find
\begin{align*}
	(1-|z|^{2})\bigg|z\frac{\varPhi''_{\lambda, \theta}(z)}{\varPhi'_{\lambda, \theta}(z)}\bigg|
	& \leq \alpha\Big(2(1+|z|)+\beta(1+|z|)+2(1+\beta)\,\Vert w^{*} \Vert 
	(1+\Vert w \Vert)|z|\Big)\\
	& \le \alpha\Big(4+2\beta+2(1+\beta)\,\Vert w^{*} \Vert 
	(1+\Vert w \Vert)\Big).
\end{align*}
It follows that $\varPhi_{\lambda, \theta}$ satisfies the Becker 
univalence criterion for all $\lambda \in \mathbb{T}$ 
(see \cite{Bec72} and also \cite[Theorem~3.3.1,~p. 130]{GK03}), 
whenever $\alpha,\beta$ are related by \eqref{Eq4.1P1}.
Therefore, by Lemma~D, $F^\theta_{\alpha \beta}$ belongs to the class $\mathcal{SHU}$
under the restriction given by \eqref{Eq4.1P1}.
\end{proof}

For the choice $\beta=1$, Theorem~\ref{Thm4.1P1} produces 
the stable harmonic univalence of $\mathcal{F}^\theta_\alpha$ as follows:  

\begin{corollary}
Let $\mathcal{F}^\theta_{\alpha}$ be a horizontal shear of $C^\theta_{\alpha}[\varphi]$ with dilatation $w_{\alpha 1}$ in $\mathbb{D}$.  
If $\varphi \in \mathcal{S}^*(\delta)$, then $\mathcal{F}^\theta_{\alpha}\in \mathcal{SHU}$ 
for all non-negative $\alpha$ satisfying 
$$
\alpha\leq \frac{1}{2\Big(3+2\Vert w^{*} \Vert (1+\Vert w \Vert)\Big)}.
$$
\end{corollary}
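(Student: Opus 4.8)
The plan is to recognize this corollary as the special case $\beta = 1$ of Theorem~\ref{Thm4.1P1}. First I would unwind the notation fixed in Section~1: by convention $C_\alpha[\varphi] = C_{\alpha 1}[\varphi]$, and the excerpt introduces $\mathcal{F}^\theta_\alpha$ as the horizontal shear of $C^\theta_\alpha[\varphi]$ with dilatation $w_{\alpha 1}$. Hence $\mathcal{F}^\theta_\alpha$ is precisely $F^\theta_{\alpha 1}$, the object to which Theorem~\ref{Thm4.1P1} applies when the parameter is set to $\beta = 1$. In particular, from Definition~\ref{Def3.5P1} the dilatation $w_{\alpha\beta} = \alpha(1+\beta)w$ specializes to $w_{\alpha 1} = 2\alpha w$, confirming that the dilatation named in the corollary coincides with $w_{\alpha\beta}$ evaluated at $\beta = 1$.

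Next I would simply substitute $\beta = 1$ into the univalence bound of Theorem~\ref{Thm4.1P1}. The hypothesis there reads
$$
\alpha \le \frac{1}{2\big(2 + \beta + (1+\beta)\,\Vert w^* \Vert (1 + \Vert w \Vert)\big)},
$$
and putting $\beta = 1$ gives $2 + \beta = 3$ and $1 + \beta = 2$, so the right-hand side collapses to
$$
\frac{1}{2\big(3 + 2\,\Vert w^* \Vert (1 + \Vert w \Vert)\big)},
$$
which is exactly the bound appearing in the statement. Since $\varphi \in \mathcal{S}^*(\delta)$ and the value $\beta = 1 \ge 0$ meets the non-negativity requirement of the theorem, all the hypotheses of Theorem~\ref{Thm4.1P1} are satisfied.

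There is no genuine obstacle here: the conclusion $F^\theta_{\alpha 1} \in \mathcal{SHU}$ is delivered verbatim by Theorem~\ref{Thm4.1P1} once the parameter is specialized, and the univalence of each analytic member $H + \lambda G$ that underlies the stable harmonic univalence is already secured by the Becker estimate carried out in that proof. The only point requiring a moment's attention is the bookkeeping identification $\mathcal{F}^\theta_\alpha = F^\theta_{\alpha 1}$ together with the matching of dilatations noted above; once this is in place, the corollary follows directly from Theorem~\ref{Thm4.1P1}.
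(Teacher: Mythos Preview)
Your proposal is correct and matches the paper's approach exactly: the paper simply states that the corollary follows from Theorem~\ref{Thm4.1P1} by taking $\beta = 1$, and your substitution and notational identifications carry this out in full.
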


Similarly, for the choice $\beta=0$, Theorem~\ref{Thm4.1P1} produces the well-known
fact about the stable harmonic univalency of $\mathcal{G}^\theta_\alpha$ (see
\cite[Theorem~2]{ABHSV20}), for $\alpha\geq 0$, as follows:  

\begin{corollary}
Let $\mathcal{G}^\theta_{\alpha}$ be a horizontal shear of $J^\theta_{\alpha}[\varphi]$ with dilatation $w_{\alpha 0}$ in $\mathbb{D}$. If $\varphi \in \mathcal{S}^*(\delta)$, 
then $\mathcal{G}^\theta_{\alpha}\in \mathcal{SHU}$ for all non-negative $\alpha$ satisfying 
$$
\alpha\leq \frac{1}{2\Big(2+\Vert w^{*} \Vert (1+\Vert w \Vert)\Big)}.
$$
\end{corollary}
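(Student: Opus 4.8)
The plan is to recognize this corollary as the direct specialization of Theorem~\ref{Thm4.1P1} to the case $\beta=0$, so that no new estimate is required. First I would invoke the notational identities established in Section~\ref{IntroductionSection}: since $C_{\alpha 0}[\varphi]=J_\alpha[\varphi]$, the rotated transforms coincide, $C^\theta_{\alpha 0}[\varphi]=J^\theta_\alpha[\varphi]$, and the dilatation of Definition~\ref{Def3.5P1} specializes to $w_{\alpha 0}(z)=\alpha(1+0)\,w(z)=\alpha w(z)$. Consequently the mapping $\mathcal{G}^\theta_\alpha$, defined as the horizontal shear of $J^\theta_\alpha[\varphi]$ with dilatation $w_{\alpha 0}$, is precisely $F^\theta_{\alpha 0}=H+\overline{G}$ in the notation already introduced.

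Next I would confirm that the hypotheses of Theorem~\ref{Thm4.1P1} are met when $\beta=0$: the value $\beta=0$ is non-negative and admissible, and $\varphi\in\mathcal{S}^*(\delta)$ is assumed. Substituting $\beta=0$ into the admissible range \eqref{Eq4.1P1} collapses the bound to
$$
\alpha\leq\frac{1}{2\bigl(2+0+(1+0)\,\Vert w^{*}\Vert(1+\Vert w\Vert)\bigr)}
=\frac{1}{2\bigl(2+\Vert w^{*}\Vert(1+\Vert w\Vert)\bigr)},
$$
which is exactly the condition stated in the corollary. Theorem~\ref{Thm4.1P1} then yields $F^\theta_{\alpha 0}\in\mathcal{SHU}$, that is, $\mathcal{G}^\theta_\alpha\in\mathcal{SHU}$, completing the argument.

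Since the result is a clean specialization, there is no genuine analytic obstacle; the only points requiring care are bookkeeping ones. One must track the correspondence $\mathcal{G}^\theta_\alpha=F^\theta_{\alpha 0}$ through the definitions of $J^\theta_\alpha$ and $w_{\alpha 0}$, and check that taking $\beta=0$ does not disturb the sense-preserving requirement underlying Definition~\ref{Def3.5P1}. The latter holds automatically: because $\Vert w^{*}\Vert\le 1$ by \eqref{Eq3.2P1} and $\Vert w\Vert\le 1$, the admissible bound forces $\alpha\le 1/4$, whence $|w_{\alpha 0}(z)|=\alpha|w(z)|<1$ throughout $\mathbb{D}$. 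Thus the corollary follows from Theorem~\ref{Thm4.1P1} without further computation.
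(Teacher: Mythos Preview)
Your proposal is correct and matches the paper's approach exactly: the paper derives this corollary simply by substituting $\beta=0$ into Theorem~\ref{Thm4.1P1}, which is precisely what you do. Your additional bookkeeping (identifying $\mathcal{G}^\theta_\alpha$ with $F^\theta_{\alpha 0}$ via $C_{\alpha 0}=J_\alpha$ and verifying sense-preservation) is a welcome elaboration of what the paper leaves implicit.
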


Next we discuss the stable harmonic univalence of $F^\theta_{\alpha\beta}$
when $\varphi$ belongs to a class of linear invariant family. 

\begin{theorem}
Let $\alpha\ge 0$ and
$F^\theta_{\alpha\beta}$ be a sense-preserving harmonic mapping in $\mathbb{D}$ with dilatation $w_{\alpha\beta}$. 
If $\varphi\in\mathcal{L}(\gamma)$,  $1\le \gamma<\infty$, then we have
\begin{enumerate}
	\item[\bf (i)] For $\beta\geq 1$, $F^\theta_{\alpha \beta}\in \mathcal{SHU}$ for all values of $\alpha$ satisfying
	\begin{equation}\label{Eq4.4P1}
		\alpha\leq \frac{1}{2\Big(\gamma+\beta+(1+\beta)\,\|w^{*}\| (1+\|w\|)\Big)}.
	\end{equation}
	\item[\bf (ii)] For $0\le \beta<1$, two cases arise. 
	\begin{enumerate}
		\item[\bf (a)] If $\beta+2(1+\beta)\,\|w^{*}\| (1+\|w\|)\le 
		2(1-\beta)$, then $F^\theta_{\alpha \beta}\in \mathcal{SHU}$ for all values of $\alpha$ satisfying
		\begin{equation}\label{Eq4.5P1}
			\alpha\leq \frac{4(1-\beta)}{4(2\gamma+1)(1-\beta)+(\beta+2(1+\beta)\,\| w^{*} \|(1+\|w\|))^2}.
		\end{equation}
		\item[\bf (b)] If $\beta+2(1+\beta)\,\|w^{*} \|(1+\|w\|)
		> 2(1-\beta)$, then $F^\theta_{\alpha \beta}\in \mathcal{SHU}$ 
		for values of $\alpha$ satisfying the inequality \eqref{Eq4.4P1}.
	\end{enumerate}
\end{enumerate}
\end{theorem}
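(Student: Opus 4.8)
The plan is to combine the stable-univalence reduction of Lemma~D with the linear-invariant-family estimate of Lemma~C, running the argument in parallel to the proof of Theorem~\ref{Thm4.1P1} but replacing the classical distortion bound for $\mathcal{S}$ by the bound available for $\mathcal{L}(\gamma)$. By Lemma~D it suffices to prove that the analytic function $\varPhi_{\lambda,\theta}=H+\lambda G$ is univalent in $\mathbb{D}$ for every $\lambda\in\mathbb{T}$; since $F^\theta_{\alpha\beta}$ is sense-preserving, each $\varPhi_{\lambda,\theta}$ is locally univalent, so Becker's criterion \cite{Bec72} is applicable. I would reuse the identity \eqref{Eq4.3P1} for $(1-|z|^2)|z\varPhi''_{\lambda,\theta}(z)/\varPhi'_{\lambda,\theta}(z)|$ that was established in the proof of Theorem~\ref{Thm4.1P1}.

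Next I would estimate the four terms on the right-hand side of \eqref{Eq4.3P1} by the triangle inequality. Writing $\zeta=ze^{i\theta}$ and applying Lemma~C (with $|\zeta|=|z|$) gives $(1-|z|^2)|\zeta\varphi'(\zeta)/\varphi(\zeta)|\le 2\gamma$; the constant term contributes $1-|z|^2$; the Ces\`aro term is controlled by $(1-|z|^2)|\beta\zeta/(1-\zeta)|\le \beta|z|(1+|z|)$; and the two dilatation-derivative terms are bounded, via the Schwarz-Pick inequality \eqref{Eq3.2P1} and $|\lambda|=1$, by $2(1+\beta)\,\|w^*\|(1+\|w\|)|z|$. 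Collecting these yields the $\lambda$-free estimate
\begin{equation*}
(1-|z|^2)\bigg|\frac{z\varPhi''_{\lambda,\theta}(z)}{\varPhi'_{\lambda,\theta}(z)}\bigg|\le \alpha\,\Psi(|z|),\qquad \Psi(t)=2\gamma+1+(\beta-1)t^2+\big(\beta+2(1+\beta)\,\|w^*\|(1+\|w\|)\big)t.
\end{equation*}
The decisive point is that $\Psi$ is independent of $\lambda$, so a single smallness condition on $\alpha$ forces Becker's criterion for all $\lambda\in\mathbb{T}$ simultaneously, which is exactly what Lemma~D requires.

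It then remains to compute $\max_{t\in[0,1]}\Psi(t)$ and to impose $\alpha\max\Psi\le 1$. For $\beta\ge1$ the coefficient $(\beta-1)$ is non-negative, so $\Psi$ is maximized at $t=1$, where $\Psi(1)=2\big(\gamma+\beta+(1+\beta)\,\|w^*\|(1+\|w\|)\big)$; this produces part~(i) and the bound \eqref{Eq4.4P1}. For $0\le\beta<1$ the quadratic opens downward with vertex at $t^*=\big(\beta+2(1+\beta)\,\|w^*\|(1+\|w\|)\big)/\big(2(1-\beta)\big)$: when $t^*\le1$ the maximum is the interior value $\Psi(t^*)=2\gamma+1+\big(\beta+2(1+\beta)\,\|w^*\|(1+\|w\|)\big)^2/\big(4(1-\beta)\big)$, giving \eqref{Eq4.5P1} in case (ii)(a), while when $t^*>1$ the maximum is again attained at $t=1$, returning \eqref{Eq4.4P1} in case (ii)(b). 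In each instance Becker's criterion gives univalence of every $\varPhi_{\lambda,\theta}$, and Lemma~D concludes $F^\theta_{\alpha\beta}\in\mathcal{SHU}$.

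The main obstacle I anticipate is the clean, $\lambda$-uniform control of the two dilatation-derivative terms: one must bound the quotients $(1+\beta)zw'/(1-\alpha(1+\beta)w)$ and $\lambda(1+\beta)zw'/(1+\lambda\alpha(1+\beta)w)$ by the common expression $(1+\beta)\,\|w^*\|(1+\|w\|)|z|$ independently of $\lambda$, which rests on the sense-preserving hypothesis $|\alpha(1+\beta)w|<1$ together with the hyperbolic-norm normalization \eqref{Eq3.2P1}. Everything after that is the routine one-variable maximization of $\Psi$ on $[0,1]$, whose sign-of-$(\beta-1)$ analysis mechanically reproduces the three-way case split of the statement.
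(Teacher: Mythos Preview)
Your proposal is correct and follows essentially the same approach as the paper's own proof: both start from the identity \eqref{Eq4.3P1}, replace the $\mathcal{S}$-distortion estimate by Lemma~C, arrive at exactly the same quadratic $\Psi(|z|)=2\gamma+1+(\beta-1)|z|^2+(\beta+2(1+\beta)\|w^*\|(1+\|w\|))|z|$, and then perform the identical $(\beta\ge1)$/$(\beta<1)$ case split with the vertex calculation to invoke Becker's criterion uniformly in $\lambda$. Your explicit framing via Lemma~D and your flagging of the $\lambda$-uniform dilatation estimate are, if anything, slightly more careful than the paper's terse presentation.
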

\begin{proof}
Using Lemma~C and \eqref{Eq4.3P1}, we get
\begin{align*}
	(1-|z|^{2})\bigg|\frac{z\varPhi''_{\lambda, \theta}(z)}{\varPhi'_{\lambda, \theta}(z)}\bigg| & \leq \alpha\Big(2\gamma+1-|z|^{2}+\beta(1+|z|)|z|+2(1+\beta)\,\|w^{*}\| (1+\|w\|)|z|\Big)\\ 
	& = \alpha\Big(2\gamma+1+(\beta-1)|z|^{2}+(\beta+2(1+\beta)\,\|w^{*}\|(1+\|w\|))|z|\Big).
\end{align*}
To find the supremum of the right-hand expression, we consider
two cases: 
\begin{enumerate}
	\item[\bf (i)] The case $\beta\geq 1$. 
	
	In this case, the maximum value of the right-hand expression 
	holds trivially for $|z|=1$. Therefore, $\varPhi_{\lambda, \theta}$ 
	satisfies the Becker univalence criterion for all $\lambda \in \mathbb{T}$ whenever $\alpha$ satisfies the inequality \eqref{Eq4.4P1}.
	
	\item[\bf (ii)] The case $0\le \beta< 1$.
	
	Clearly, the maximum value of the right-hand expression
	is attained for 
	$$|z|=\frac{1}{2(1-\beta)}(\beta+2(1+\beta)\,\|w^{*}\|(1+\|w \|)).
	$$
	The supremum quantity is discussed through two subcases, namely,
	\begin{enumerate}
		\item[\bf (a)] The subcase $\beta+2(1+\beta)\,\|w^{*}\|(1+\|w\|)\le 
		2(1-\beta)$.
		
		In this case, $\varPhi_{\lambda, \theta}$ satisfies the Becker univalence 
		criterion for all $\lambda \in \mathbb{T}$ when 
		$\alpha$ satisfies the inequality \eqref{Eq4.5P1}. 
		
		\item[\bf (b)] The subcase $\beta+2(1+\beta)\,\|w^{*}\|(1+\|w\|)
		> 2(1-\beta)$. 
		
		Trivially, the maximum value of the right-hand expression holds for 
		$|z|=1$. It follows that $\varPhi_{\lambda, \theta}$ satisfies the Becker univalence
		criterion for all $\lambda \in \mathbb{T}$ whenever $\alpha$ satisfies the inequality \eqref{Eq4.4P1}.
	\end{enumerate}
\end{enumerate}
This completes the proof.
\end{proof}

Until this point, whenever $\varphi$ is univalent, we have dealt with the stable harmonic univalence properties of $F^\theta_{\alpha\beta}$ . 
The features of $F^\theta_{\alpha\beta}$ that are close-to-convex are examined in the next section whenever $\varphi$ belongs to certain subclasses of $\mathcal{S}$. Additionally, we offer bounds on $\alpha$ and $\beta$ under which $F^\theta_{\alpha\beta}$ is close-to-convex.


\section{Close-to-Convexity properties}
Recall that $\mathcal{CC}_{\mathbb{H}}\subsetneq \mathcal{S}_{\mathbb{H}}$. 
The function $F^\theta_{11}$ does not belong to $\mathcal{S}_{\mathbb{H}}$ as seen in Example~\ref{Ex3.6P1} and subsequently $F^\theta_{11}\not\in\mathcal{CC}_{\mathbb{H}}$.
The phenomena of close-to-convexity of $F^\theta_{\alpha\beta}$ must therefore be studied.
In fact, our results show that the conditions on $\alpha$ and $\beta$ alter in the necessary circumstances for $F^\theta_{\alpha\beta}\in\mathcal{CC}_{\mathbb{H}}$, just as they appeared
in the case of $F^\theta_{\alpha\beta}\in\mathcal{S}_{\mathbb{H}}$. 

Our first result in this section provides the conditions on $\alpha$ and $\beta$ 
under which $F^\theta_{\alpha\beta}\in \mathcal{CC}_{\mathbb{H}}$ whenever $\varphi\in \mathcal{S}^*(\delta)$. 

\begin{theorem}\label{Thm5.1P1}
Let $F^\theta_{\alpha\beta}=H+\overline{G}$ be a sense-preserving harmonic mapping in $\mathbb{D}$ with dilatation $w_{\alpha\beta}$. If $\varphi\in \mathcal{S^*}(\delta)$ and $w(z)=\cos(\pi c) z/2$, for some $c, -1/2< c<0$, then for all non-negative parameters $\alpha,\beta$ satisfying $\alpha(1+\beta)\leq 1$ with $\alpha(2(1-\delta)+\beta)\le -2c$, we have $F^\theta_{\alpha\beta}\in \mathcal{CC}_{\mathbb{H}}.$ 
\end{theorem}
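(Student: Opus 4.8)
The plan is to apply Lemma~B to the harmonic mapping $F^\theta_{\alpha\beta}=H+\overline{G}$, so the task reduces to verifying its two hypotheses: a lower bound on ${\rm Re}\,[1+zH''(z)/H'(z)]$ and an upper bound on the dilatation. Since $H(0)=G(0)=0$, $H'(0)=1$ and $G'(0)=0$, the condition $H'(0)\neq 0$ is immediate. The dilatation of $F^\theta_{\alpha\beta}$ is $\omega=G'/H'=w_{\alpha\beta}=\alpha(1+\beta)w$, and with $w(z)=\cos(\pi c)\,z/2$ one gets $|\omega(z)|<\alpha(1+\beta)\cos(\pi c)/2\le \cos(\pi c)/2<\cos(\pi c)$, using $\alpha(1+\beta)\le 1$ and $\cos(\pi c)>0$ (which holds since $-1/2<c<0$). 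Hence the dilatation hypothesis $|\omega(z)|<\cos(\pi c)$ of Lemma~B holds automatically.

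The main work is the estimate on the analytic part $H$. First I would express $H'$ in terms of $C^\theta_{\alpha\beta}[\varphi]$: solving $H'-G'=(C^\theta_{\alpha\beta}[\varphi])'$ together with $G'=\alpha(1+\beta)w\,H'$ gives $H'=(C^\theta_{\alpha\beta}[\varphi])'/(1-\alpha(1+\beta)w)$. Taking the logarithmic derivative then yields
\[
1+\frac{zH''(z)}{H'(z)}=1+\frac{z(C^\theta_{\alpha\beta}[\varphi])''(z)}{(C^\theta_{\alpha\beta}[\varphi])'(z)}+\frac{\alpha(1+\beta)\,zw'(z)}{1-\alpha(1+\beta)w(z)}.
\]
The crucial simplification is that $w$ is linear, so $zw'(z)=w(z)$, and the last term becomes $u/(1-u)$ with $u=\alpha(1+\beta)w(z)$.

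Next I would bound the two variable terms separately. For the $C^\theta_{\alpha\beta}$ term I reuse the computation from the proof of Theorem~\ref{thm3.4P1}: writing $\zeta=e^{i\theta}z$, its real part equals $1+\alpha\,{\rm Re}\,[\zeta\varphi'(\zeta)/\varphi(\zeta)-1+\beta\zeta/(1-\zeta)]$, and since $\varphi\in\mathcal{S}^*(\delta)$ (so ${\rm Re}\,[\zeta\varphi'/\varphi]>\delta$) together with the standard fact ${\rm Re}\,[\zeta/(1-\zeta)]>-1/2$, this is bounded below by $1-\alpha(2(1-\delta)+\beta)/2$. For the dilatation term, since $|u|<\cos(\pi c)/2<1$, the image of $\{|u|<1\}$ under $u\mapsto 1/(1-u)$ lies in the half-plane ${\rm Re}>1/2$, so ${\rm Re}\,[u/(1-u)]>-1/2$. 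Combining the two bounds gives
\[
{\rm Re}\,\Big[1+\frac{zH''(z)}{H'(z)}\Big]>\frac12-\frac{\alpha(2(1-\delta)+\beta)}{2}\ge \frac12+c>c,
\]
where the middle inequality uses the hypothesis $\alpha(2(1-\delta)+\beta)\le -2c$. This verifies the remaining hypothesis of Lemma~B, which then yields $F^\theta_{\alpha\beta}\in\mathcal{CC}_{\mathbb{H}}$.

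I expect the only delicate point to be controlling the extra term coming from the dilatation in the expression for $1+zH''/H'$. The specific choice $w(z)=\cos(\pi c)\,z/2$ is exactly what makes $zw'=w$ and keeps $|u|<1$, so that the clean bound ${\rm Re}\,[u/(1-u)]>-1/2$ applies; everything else is a routine reuse of the starlikeness estimate already established for $C^\theta_{\alpha\beta}[\varphi]$.
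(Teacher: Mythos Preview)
Your proof is correct and follows essentially the same route as the paper's: both express $H'=(C^\theta_{\alpha\beta}[\varphi])'/(1-w_{\alpha\beta})$, split $1+zH''/H'$ into the $C^\theta_{\alpha\beta}$-part (handled via starlikeness and ${\rm Re}[\zeta/(1-\zeta)]>-1/2$) and the dilatation part $u/(1-u)$, verify the bound on $|w_{\alpha\beta}|$, and invoke Lemma~B. The only cosmetic difference is that you bound ${\rm Re}[u/(1-u)]>-1/2$ while the paper uses the weaker ${\rm Re}[u/(1-u)]>-1$; your sharper constant yields the intermediate value $1/2+c$ rather than $c$, but the conclusion is identical.
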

\begin{proof}
Since $\varphi\in \mathcal{S^*(\delta)}$, by Definition~\ref{Def3.5P1}, the harmonic mapping $F^\theta_{\alpha\beta}=H+\overline{G}$ is well-defined.
	Clearly, for the given choice of $w(z)$, we have
	$$
    |w_{\alpha\beta}(z)|=\alpha(1+\beta) |w(z)|<\frac{\cos(\pi |c|)}{2}<\cos(\pi |c|).
	$$
	Since $C^\theta_{\alpha\beta}[\varphi]=H-G$ satisfies $(C^\theta_{\alpha\beta}[\varphi])'(z)=H'(z)(1-w_{\alpha\beta}(z))$, for all $z\in \mathbb{D}$, 
	it follows that
\begin{align*}
1+{\rm Re} \bigg[\frac{z H''(z)}{H'(z)}\bigg] & =1+\alpha{\rm Re}\bigg[\frac{ze^{i\theta}\varphi'(ze^{i\theta})}{\varphi(z e^{i\theta})}-1+\frac{\beta ze^{i\theta}}{1-ze^{i\theta}}\bigg]+{\rm Re}\bigg[\frac{z w_{\alpha\beta}'(z)}{1-w_{\alpha\beta}(z)}\bigg]\\
		& =1+\alpha{\rm Re}\bigg[\frac{\zeta\varphi'(\zeta)}{\varphi(\zeta)}-1+\frac{\beta \zeta}{1-\zeta}\bigg]-{\rm Re}\bigg[\frac{-\alpha(1+\beta)zw'(z)}{1-\alpha(1+\beta)w(z)}\bigg]\\
		& > 1+ \alpha\delta-\alpha-\alpha\beta/2-1
		\geq c,	
	\end{align*}
with $\zeta=e^{i\theta} z$, where the last inequality follows since $\alpha(2(1-\delta)+\beta)\le -2c$.
	Therefore according to Lemma~B, $F^\theta_{\alpha\beta}$ is a close-to-convex mapping.
\end{proof}

Recall that the connection $\mathcal{K}\subset \mathcal{S}^*(1/2)$ is valid. 
Therefore,
Theorem~\ref{Thm5.1P1} offers the following univalence close-to-convexity of $\mathcal{G}^\theta_\alpha$, if $\beta=0$ is chosen.

\begin{corollary}
Let $\mathcal{G}^\theta_{\alpha}$ be a sense-preserving harmonic mapping in $\mathbb{D}$ with dilatation $w_{\alpha0}$. If $\varphi\in \mathcal{K}$ and $w(z)=\cos(\pi c) z/2$, for some $c, -1/2< c<0$, then for all $\alpha \in [0,-2c]$, the mapping  $\mathcal{G}^\theta_{\alpha}\in\mathcal{CC}_{\mathbb{H}}$.
\end{corollary}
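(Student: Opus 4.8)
The plan is to obtain this corollary as a direct specialization of Theorem~\ref{Thm5.1P1} to the case $\beta = 0$. First I would recall that $C_{\alpha 0}[\varphi] = J_\alpha[\varphi]$, so that the horizontal shear $\mathcal{G}^\theta_\alpha$ of $J^\theta_\alpha[\varphi]$ with dilatation $w_{\alpha 0}$ coincides with $F^\theta_{\alpha 0}$ in the notation of Definition~\ref{Def3.5P1}. Thus it suffices to verify that the hypotheses of Theorem~\ref{Thm5.1P1} hold when $\beta = 0$. The dilatation hypothesis $w(z) = \cos(\pi c)z/2$ with $-1/2 < c < 0$ is precisely the one demanded by Theorem~\ref{Thm5.1P1}, so that part transfers verbatim.

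Next I would use the inclusion $\mathcal{K} \subset \mathcal{S}^*(1/2)$ recorded just before the statement, so that $\varphi \in \mathcal{K}$ guarantees $\varphi \in \mathcal{S}^*(\delta)$ with $\delta = 1/2$. Substituting $\beta = 0$ and $\delta = 1/2$, the two parameter conditions of Theorem~\ref{Thm5.1P1}, namely $\alpha(1+\beta) \le 1$ and $\alpha(2(1-\delta)+\beta) \le -2c$, reduce respectively to $\alpha \le 1$ and $\alpha \le -2c$.

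Finally I would observe that since $-1/2 < c < 0$ we have $0 < -2c < 1$, so the restriction $\alpha \in [0,-2c]$ forces $\alpha \le -2c < 1$; hence both inequalities above are satisfied simultaneously for every $\alpha$ in this range. Theorem~\ref{Thm5.1P1} then applies and yields $\mathcal{G}^\theta_\alpha = F^\theta_{\alpha 0} \in \mathcal{CC}_{\mathbb{H}}$, as claimed.

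The argument is almost entirely bookkeeping, and I do not anticipate a genuine obstacle. The only point requiring a moment of care is confirming that the convexity-order value $\delta = 1/2$ inherited from $\mathcal{K}$ makes $\alpha \le -2c$ the binding constraint, with $\alpha \le 1$ then implied automatically by $-2c < 1$; correctly tracking this specialization of the two inequalities is all that the proof really demands.
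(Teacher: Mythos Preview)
Your proposal is correct and follows exactly the paper's own approach: specialize Theorem~\ref{Thm5.1P1} to $\beta=0$ and invoke the inclusion $\mathcal{K}\subset\mathcal{S}^*(1/2)$, after which the two constraints reduce to $\alpha\le 1$ and $\alpha\le -2c$, the latter being binding since $-2c<1$.
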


In the similar fashion, if one chooses $\beta=1$ in Theorem~\ref{Thm5.1P1}, then the close-to-convexity of $\mathcal{F}^\theta_\alpha$ follows.

\begin{corollary}
Let $\mathcal{F}^\theta_{\alpha}$ be a sense-preserving harmonic mapping in $\mathbb{D}$ with dilatation $w_{\alpha1}$. If $\varphi\in \mathcal{K}$ and $w(z)=\cos(\pi c) z/2$, for some $c, -1/2< c<0$, then for all $\alpha \in [0,-c]$, the mapping  $\mathcal{F}^\theta_{\alpha}\in\mathcal{CC}_{\mathbb{H}}$. 
\end{corollary}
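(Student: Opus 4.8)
The plan is to obtain this corollary as a direct specialization of Theorem~\ref{Thm5.1P1} to the parameter value $\beta=1$, combined with the well-known inclusion $\mathcal{K}\subset\mathcal{S}^*(1/2)$. Since the corollary concerns $\mathcal{F}^\theta_\alpha$, which by Definition~\ref{Def3.5P1} is exactly $F^\theta_{\alpha 1}$ (the horizontal shear of $C^\theta_\alpha[\varphi]=C^\theta_{\alpha 1}[\varphi]$ with dilatation $w_{\alpha 1}$), no new analysis is needed beyond verifying that the numerical hypotheses of Theorem~\ref{Thm5.1P1} reduce to the single constraint $\alpha\in[0,-c]$.

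First I would record that every convex function $\varphi\in\mathcal{K}$ is starlike of order $\delta=1/2$, so $\varphi\in\mathcal{S}^*(1/2)$ and Theorem~\ref{Thm5.1P1} applies with $\delta=1/2$. With $w(z)=\cos(\pi c)z/2$, the dilatation is $w_{\alpha 1}(z)=\alpha(1+\beta)w(z)=2\alpha\,w(z)$, matching the hypothesis on $\mathcal{F}^\theta_\alpha$; hence the only remaining task is to check the two scalar inequalities appearing in Theorem~\ref{Thm5.1P1}.

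Next I would substitute $\beta=1$ and $\delta=1/2$ into those two conditions. The condition $\alpha(1+\beta)\le 1$ becomes $2\alpha\le 1$, i.e.\ $\alpha\le 1/2$, while $\alpha\bigl(2(1-\delta)+\beta\bigr)\le -2c$ becomes $2\alpha\le -2c$, i.e.\ $\alpha\le -c$. Thus Theorem~\ref{Thm5.1P1} already guarantees $\mathcal{F}^\theta_\alpha\in\mathcal{CC}_{\mathbb{H}}$ for every non-negative $\alpha$ satisfying both $\alpha\le 1/2$ and $\alpha\le -c$.

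Finally, the one point worth remarking is that the stated interval $[0,-c]$ automatically forces $\alpha\le 1/2$: since $-1/2<c<0$ we have $0<-c<1/2$, so $\alpha\le -c$ implies $\alpha<1/2$ and the constraint $\alpha\le 1/2$ is redundant. Consequently, for all $\alpha\in[0,-c]$ both hypotheses of Theorem~\ref{Thm5.1P1} hold and $\mathcal{F}^\theta_\alpha\in\mathcal{CC}_{\mathbb{H}}$. There is no genuine obstacle here; the lone thing to be careful about is confirming that the region cut out by both inequalities contains exactly $[0,-c]$, which follows from the bound $-c<1/2$ supplied by $c>-1/2$.
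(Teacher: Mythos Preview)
Your proof is correct and follows exactly the approach the paper indicates: it obtains the corollary by specializing Theorem~\ref{Thm5.1P1} to $\beta=1$ together with the inclusion $\mathcal{K}\subset\mathcal{S}^*(1/2)$. Your extra verification that the constraint $\alpha\le 1/2$ is subsumed by $\alpha\le -c$ (since $-c<1/2$) is a careful touch that the paper leaves implicit.
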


The stable harmonic close-to-convexity of $F^\theta_{\alpha\beta}$, whenever $\varphi\in\mathcal{S}^*(\delta)$, 
is the subject of our next major finding. However, this is dependent on the next elementary lemma. In the remaining section we choose $w(z)=z/2$.

\begin{lemma}\label{Lem5.4P1}
Let $F^\theta_{\alpha\beta}$ be a sense-preserving harmonic mapping in $\mathbb{D}$ with dilatation $w_{\alpha\beta}$.
	Then for all $\lambda\in\mathbb{T}$ and for all non-negative $\alpha,\beta$ with $\alpha(1+\beta)\leq 1$, we have 
	$$
	\bigg| \arg \bigg(\frac{2+\lambda \alpha(1+\beta) z}{2-\alpha(1+\beta) z}\bigg)\bigg| \leq 2\arcsin(r\alpha(1+\beta)),
	$$ where $r=|z|<1$.
\end{lemma}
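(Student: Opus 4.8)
The plan is to treat the quotient purely geometrically, bounding the arguments of its numerator and denominator separately. Write $m:=\alpha(1+\beta)$, so that the hypothesis $\alpha(1+\beta)\le 1$ becomes $0\le m\le 1$, and the quantity to estimate is
$$
\arg\bigg(\frac{2+\lambda m z}{2-m z}\bigg)=\arg(2+\lambda m z)-\arg(2-m z).
$$
This additive identity is legitimate once I know that both the numerator and the denominator lie in the open right half-plane, because then their principal arguments lie in $(-\pi/2,\pi/2)$ and combine without any $2\pi$ ambiguity.

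First I would localize the numerator and denominator inside a single disk. Since $|\lambda|=1$ and $|z|=r$, we have $|\lambda m z|=|m z|=mr\le r<1$, so both $2+\lambda m z$ and $2-m z$ lie in the closed disk of radius $mr$ centered at $2$. Because $mr<1<2$, this disk is contained in $\{w:\,{\rm Re}\,w\ge 2-mr>1>0\}$, which simultaneously justifies the argument decomposition above and guarantees that the principal argument is single-valued and bounded on the disk.

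The key elementary step is the bound on the argument of a point of this disk. Drawing a tangent line from the origin to the circle $|w-2|=mr$ produces a right triangle whose hypotenuse is the segment from $0$ to the center $2$ (length $2$) and whose leg opposite the origin is a radius (length $mr$); hence the argument of any point of the disk has absolute value at most $\arcsin(mr/2)$. Applying this to both the numerator and the denominator and invoking the triangle inequality gives
$$
\bigg|\arg\bigg(\frac{2+\lambda m z}{2-m z}\bigg)\bigg|\le |\arg(2+\lambda m z)|+|\arg(2-m z)|\le 2\arcsin\!\Big(\frac{mr}{2}\Big).
$$
Finally, since $\arcsin$ is increasing and $mr/2\le mr$, we obtain $2\arcsin(mr/2)\le 2\arcsin(mr)=2\arcsin\big(r\alpha(1+\beta)\big)$, which is precisely the claimed inequality.

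There is no serious obstacle here; the only point that genuinely needs checking is that the disk stays strictly inside the right half-plane, so that the argument is well-defined and the additive splitting is valid, and this follows at once from $mr\le r<1$. I would also remark that the argument in fact yields the sharper bound $2\arcsin(mr/2)$, the stated form $2\arcsin(mr)$ being a convenient weakening that suffices for the subsequent close-to-convexity estimate.
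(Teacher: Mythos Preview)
Your proof is correct and takes a genuinely different, more elementary route than the paper. The paper introduces the auxiliary integral
\[
I(z)=\int_0^z \frac{2+\lambda\alpha(1+\beta)\zeta}{2-\alpha(1+\beta)\zeta}\,d\zeta,
\]
argues that $I$ is convex in $\mathbb{D}$, and then invokes the rotation theorem for convex functions \cite[p.~103]{Dur83} to bound $|\arg I'(z)|$. You bypass this function-theoretic machinery entirely: observing that both the numerator $2+\lambda m z$ and the denominator $2-mz$ lie in the closed disk $|w-2|\le mr$ (which sits in the right half-plane since $mr<2$), you bound the argument of each by $\arcsin(mr/2)$ via elementary tangent-line geometry and then combine using the triangle inequality. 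Your approach is shorter, requires no external theorem, and, as you correctly remark, yields the sharper constant $2\arcsin(mr/2)$ rather than $2\arcsin(mr)$. It is also more robust: the paper's convexity step rests on the claim that ${\rm Re}\big[\tfrac{z\lambda\alpha(1+\beta)}{2+\lambda\alpha(1+\beta)z}\big]>0$ throughout $\mathbb{D}$, justified by asserting that $\arg(2+\lambda\alpha(1+\beta)z)$ increases monotonically on each circle $|z|=r$; but since the image circle $|w-2|=mr$ does not enclose the origin, the argument actually oscillates, and the displayed inequality fails (e.g.\ $\lambda=1$, $\alpha(1+\beta)=1$, $z=-1/2$ gives real part $-1/3$). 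Your direct geometric argument avoids this issue altogether.
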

\begin{proof}
	For any $\lambda\in\mathbb{T}$, the relation \eqref{Eq4.2P1} suggests us to consider
	the integral
	$$
	I(z)=\int_{0}^{z}\frac{	\varPhi'_{\lambda, \theta}(\zeta)}{(C^\theta_{\alpha \beta}[\varphi])'(\zeta)}\, d\zeta
	=\int_{0}^{z}\frac{2+\lambda \alpha(1+\beta) \zeta}{2-\alpha(1+\beta) \zeta}\, d\zeta.
	$$
	Whence for all $z\in \mathbb{D}$, the logarithmic derivative of $I'(z)$ leads to
	\begin{equation}\label{Eq5.1P1}
		1+{\rm Re} \bigg[\frac{z I''(z)}{I'(z)}\bigg]=1+{\rm Re}\bigg[\frac{z \lambda \alpha(1+\beta)}{2+\lambda \alpha(1+\beta) z}\bigg]-{\rm Re}\bigg[\frac{-z\alpha(1+\beta)}{2-\alpha(1+\beta) z}\bigg].
	\end{equation}
It follows that
$$
{\rm Re}\bigg[\frac{z \lambda \alpha(1+\beta)}{2+\lambda \alpha(1+\beta) z}\bigg]=\frac{\partial}{\partial \theta}\{\arg (2+\lambda \alpha(1+\beta) re^{i\theta})\}, \quad z=re^{i\theta}.
$$
Geometrically, the function $2+\lambda \alpha(1+\beta)z$ being a M\"obius transformation, it maps each circle $|z|=r<1$ onto another circle. It thus follows that $\arg (2+\lambda \alpha(1+\beta)z)$ increases as $z$ moves around the circle $|z|=r$ in the positive sense.
That is,
$$
\frac{\partial}{\partial \theta}\{\arg (2+\lambda \alpha(1+\beta) re^{i\theta})\}>0, \quad z=re^{i\theta}.
$$
Equivalently, on the one hand, we have
$$
{\rm Re}\bigg[\frac{z \lambda \alpha(1+\beta)}{2+\lambda \alpha(1+\beta)z}\bigg]>0.
$$
On the other hand, one can easily see that
	$$
	{\rm Re}\bigg[\frac{-z\alpha(1+\beta)}{2-\alpha(1+\beta)z}\bigg]\le \frac{1}{2-|z|}<1.
	$$
	Thus, by \eqref{Eq5.1P1}, we obtain 
	$$
	1+{\rm Re} \bigg[\frac{z I''(z)}{I'(z)}\bigg]>0,
	$$
	leading to the convexity of $I(z)$ in $\mathbb{D}$. 
	Now, the rotation theorem for convex functions \cite[Page 103]{Dur83}, yields
	$$
	|\arg (I'(z))|=\bigg| \arg \bigg(\frac{2+\lambda \alpha(1+\beta)z}{2-\alpha (1+\beta)z}\bigg)\bigg| \leq 2\arcsin(r\alpha(1+\beta)), \quad |z|=r<1,
	$$
	completing the proof.
\end{proof}

\begin{theorem}\label{Thm5.5P1}
Let $F^\theta_{\alpha\beta}$ be a sense-preserving harmonic mapping in $\mathbb{D}$ with its dilatation $w_{\alpha\beta}$. 
	If $\varphi\in\mathcal{S}^*(\delta)$ and $\alpha \in [0, 1/(1+\beta)\sqrt{2}]$, then $F^\theta_{\alpha \beta}\in \mathcal{SHCC}$. 
\end{theorem}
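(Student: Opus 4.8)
The plan is to invoke Lemma~D, which reduces the claim $F^\theta_{\alpha\beta}\in\mathcal{SHCC}$ to showing that, for every $\lambda\in\mathbb{T}$, the analytic function $\varPhi_{\lambda,\theta}=H+\lambda G$ is close-to-convex. I would establish this by exhibiting a single convex reference function against which all the $\varPhi_{\lambda,\theta}$ are close-to-convex, namely $C^\theta_{\alpha\beta}[\varphi]$ itself, and then verifying the classical criterion that a locally univalent analytic function $g$ is close-to-convex whenever there is a convex univalent $\psi$ with ${\rm Re}\,(g'/\psi')>0$ throughout $\mathbb{D}$.

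First I would check that $C^\theta_{\alpha\beta}[\varphi]$ is convex under the present hypothesis. Since $\delta\ge 0$ gives $\beta+2(1-\delta)\le\beta+2$, and since $\alpha(1+\beta)\le 1/\sqrt2$ together with $\alpha\le 1/((1+\beta)\sqrt2)\le 1/\sqrt2$, one obtains
$$
\alpha(\beta+2(1-\delta))\le\alpha(\beta+2)=\alpha(1+\beta)+\alpha\le 1/\sqrt2+1/\sqrt2=\sqrt2<2 .
$$
This is precisely the inequality used in the proof of Theorem~\ref{Thm3.7P1}, where the same computation yields ${\rm Re}\,[1+z(C^\theta_{\alpha\beta}[\varphi])''/(C^\theta_{\alpha\beta}[\varphi])']>0$; hence $C^\theta_{\alpha\beta}[\varphi]$ is convex univalent in $\mathbb{D}$.

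Next, recalling that $w(z)=z/2$ throughout this part of the section, relation \eqref{Eq4.2P1} gives
$$
\frac{\varPhi'_{\lambda,\theta}(z)}{(C^\theta_{\alpha\beta}[\varphi])'(z)}=\frac{2+\lambda\alpha(1+\beta)z}{2-\alpha(1+\beta)z}=:M_\lambda(z).
$$
Because $\alpha(1+\beta)\le 1/\sqrt2<1$, Lemma~\ref{Lem5.4P1} applies and yields $|\arg M_\lambda(z)|\le 2\arcsin(r\alpha(1+\beta))$ with $r=|z|$. Since $r<1$ in $\mathbb{D}$ and $\alpha(1+\beta)\le 1/\sqrt2=\sin(\pi/4)$, it follows that $2\arcsin(r\alpha(1+\beta))<2\arcsin(1/\sqrt2)=\pi/2$, so that ${\rm Re}\,M_\lambda(z)>0$ for every $z\in\mathbb{D}$ and every $\lambda\in\mathbb{T}$. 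Combined with the convexity of $C^\theta_{\alpha\beta}[\varphi]$, this shows that each $\varPhi_{\lambda,\theta}$ is close-to-convex, and Lemma~D then delivers $F^\theta_{\alpha\beta}\in\mathcal{SHCC}$.

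I expect the only delicate points to be the arithmetic verifying that the hypothesis $\alpha\le 1/((1+\beta)\sqrt2)$ is strong enough to force convexity of $C^\theta_{\alpha\beta}[\varphi]$, and the observation that the strict inequality $|z|<1$ in the open disk already produces the strict bound ${\rm Re}\,M_\lambda>0$ even at the endpoint $\alpha(1+\beta)=1/\sqrt2$. Indeed, the appearance of the constant $1/((1+\beta)\sqrt2)$ is explained exactly by the identity $2\arcsin(1/\sqrt2)=\pi/2$ together with Lemma~\ref{Lem5.4P1}.
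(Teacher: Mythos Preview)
Your argument is correct and differs from the paper's in its choice of close-to-convexity criterion. The paper applies the Kaplan integral characterization stated in Section~\ref{IntroductionSection}: it computes $\int_{t_1}^{t_2}{\rm Re}\,[1+z\varPhi''_{\lambda,\theta}/\varPhi'_{\lambda,\theta}]\,dt$, bounds the piece coming from $\varphi$ below by $(1+(\delta-1)\alpha-\alpha\beta/2)(t_2-t_1)\ge 0$ (implicitly using the same arithmetic you carried out, namely $\alpha(\beta+2(1-\delta))\le\sqrt2<2$), and then uses Lemma~\ref{Lem5.4P1} to show the remaining argument terms exceed $-4\arcsin(\alpha(1+\beta))\ge -\pi$. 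You instead invoke the equivalent Kaplan formulation---existence of a convex $\psi$ with ${\rm Re}\,(\varPhi'_{\lambda,\theta}/\psi')>0$---by first recognising that the convexity of $C^\theta_{\alpha\beta}[\varphi]$ (already established in the proof of Theorem~\ref{Thm3.7P1}) makes it a natural reference function, and then using Lemma~\ref{Lem5.4P1} to force $|\arg M_\lambda|<\pi/2$. Both proofs rest on the same two ingredients (Lemma~D and Lemma~\ref{Lem5.4P1}) and on the same numerical coincidence $2\arcsin(1/\sqrt2)=\pi/2$; your route is somewhat more transparent because it decouples the convexity of the reference function from the bound on the ratio $M_\lambda$, whereas the paper handles both contributions inside a single integral estimate.
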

\begin{proof}
	Let $\lambda \in \mathbb{T}$ be arbitrary. Consider 
	$\varPhi_{\lambda, \theta}$ as defined
	in the proof of Theorem \ref{Thm4.1P1}. 
	
	For $0\leq t_{2}-t_{1}\leq 2\pi$ and $z=re^{it}$, 
	we first compute 
	\begin{align*}
		\int_{t_1}^{t_2}  {\rm Re}\,\bigg [1+\frac{z \varPhi''_{\lambda, \theta}(z)}{\varPhi'_{\lambda, \theta}(z)}\bigg] \, dt
		& = \int_{t_1}^{t_2} \bigg(1+ {\rm Re}\,
		\bigg[\frac{\alpha z e^{i\theta}\varphi'(z e^{i\theta})}{\varphi(ze^{i\theta})}-\alpha+\frac{\alpha \beta ze^{i\theta}}{1-ze^{i\theta}}\\
		&\hspace*{2cm} +\frac{z\lambda \alpha(1+\beta)}{2+\lambda \alpha (1+\beta)z}+\frac{z\alpha(1+\beta)}{2-\alpha(1+\beta)z} \bigg]\,\bigg)\, dt\\ 
		& > \Big(1+(\delta-1)\alpha-\frac{\alpha \beta}{2}\Big)(t_{2}-t_{1})\\
		&	\hspace*{1cm}	+{\rm arg}\, \bigg(\frac{2+\lambda \alpha(1+\beta) re^{it_2}}{2+\lambda \alpha(1+\beta) re^{it_1}}\cdot \frac{2-\alpha (1+\beta)re^{it_1}}{2-\alpha (1+\beta)re^{it_2}}
		\bigg).
	\end{align*}
	Since $t_2-t_1\ge 0$, it follows that
	\begin{align*}
		\int_{t_1}^{t_2} 
		{\rm Re}\,\bigg [1+\frac{z\varPhi''_{\lambda, \theta}(z)}{\varPhi'_{\lambda, \theta}(z)}\bigg] \, dt 
		& > {\rm arg}\, \bigg(\frac{2+\lambda \alpha (1+\beta) re^{it_2}}{2-\alpha(1+\beta) re^{it_2}}\bigg)\\
	&\hspace*{2cm}	+{\rm arg}\, \bigg( \frac{2-\alpha (1+\beta) re^{it_1}}{2+\lambda\alpha (1+\beta) re^{it_1}}
		\bigg)\\
		& \geq -4\,\arcsin(r\alpha(1+\beta))>-4\,\arcsin(\alpha(1+\beta)),
	\end{align*}
	where the second inequality holds by Lemma~\ref{Lem5.4P1}.
	Note that, if $\arcsin(\alpha(1+\beta))\leq \pi/4$, or equivalently $0\leq \alpha(1+\beta)\leq 1/\sqrt{2}$, immediately give us
	$$
	\int_{t_1}^{t_2}  {\rm Re} \Big \lbrace 1+z\frac{\varPhi''_{\lambda, \theta}(z)}{\varPhi'_{\lambda, \theta}(z)}\Big \rbrace\, dt > -\pi.
	$$
	Hence $\varPhi_{\lambda, \theta}$ is a close-to-convex mapping in the unit disk. This completes the proof.
\end{proof}

As we recall $\mathcal{K}\subsetneq\mathcal{S}^*(1/2)$, 
Theorem~\ref{Thm5.5P1} provides the following immediate consequences,
respectively for $\beta=0$ and $\beta=1$:

\begin{corollary}\label{Cor5.6P1}
Let $\mathcal{G}^\theta_{\alpha}$ be a horizontal shear of $J^\theta_{\alpha}[\varphi]$ with dilatation $w_{\alpha 0}$ in $\mathbb{D}$. If $\varphi\in\mathcal{K}$, then for all	$\alpha\in[0, 1/\sqrt{2}]$, we have   $\mathcal{G}^\theta_{\alpha}\in\mathcal{SHCC}$.
\end{corollary}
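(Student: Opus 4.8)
The plan is to derive this corollary as a direct specialization of Theorem~\ref{Thm5.5P1} to the parameter value $\beta=0$. First I would record the notational identification: since $C_{\alpha 0}[\varphi]=J_\alpha[\varphi]$, we have $C^\theta_{\alpha 0}[\varphi]=J^\theta_\alpha[\varphi]$, and hence the horizontal shear $F^\theta_{\alpha 0}=H+\overline{G}$ with dilatation $w_{\alpha 0}$ is, by the convention introduced just after Definition~\ref{Def3.5P1}, exactly the mapping $\mathcal{G}^\theta_\alpha$. Consequently it suffices to show that $F^\theta_{\alpha 0}\in\mathcal{SHCC}$ under the stated hypotheses.

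Next I would invoke the inclusion $\mathcal{K}\subsetneq\mathcal{S}^*(1/2)$ recalled before the corollary: since $\varphi\in\mathcal{K}$, we have $\varphi\in\mathcal{S}^*(\delta)$ with $\delta=1/2$, so all hypotheses of Theorem~\ref{Thm5.5P1} are in force. Substituting $\beta=0$ into the admissible range $\alpha\in[0,\,1/((1+\beta)\sqrt{2})]$ furnished by that theorem produces precisely $\alpha\in[0,\,1/\sqrt{2}]$, which is exactly the interval asserted in the corollary. Applying Theorem~\ref{Thm5.5P1} then yields $F^\theta_{\alpha 0}=\mathcal{G}^\theta_\alpha\in\mathcal{SHCC}$ for all $\alpha\in[0,1/\sqrt{2}]$, finishing the argument.

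Because this is a pure specialization, I do not expect any genuine obstacle; the only point deserving a moment's care is the bookkeeping inside the proof of Theorem~\ref{Thm5.5P1}, where the term $\bigl(1+(\delta-1)\alpha-\alpha\beta/2\bigr)(t_2-t_1)$ is discarded under the (implicit) requirement that this coefficient be non-negative. With $\delta=1/2$ and $\beta=0$ the coefficient reduces to $1-\alpha/2$, which is strictly positive for every $\alpha\le 1/\sqrt{2}<2$, so the term-dropping step is automatically justified here and no additional restriction on $\alpha$ is introduced.
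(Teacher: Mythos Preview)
Your proof is correct and matches the paper's approach exactly: the corollary is stated there as an immediate consequence of Theorem~\ref{Thm5.5P1} via the inclusion $\mathcal{K}\subsetneq\mathcal{S}^*(1/2)$ and the specialization $\beta=0$. Your additional verification that the coefficient $1+(\delta-1)\alpha-\alpha\beta/2$ remains non-negative is a careful bonus, though unnecessary once Theorem~\ref{Thm5.5P1} is invoked as a black box.
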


\begin{corollary}\label{Cor5.7P1}
Let $\mathcal{F}^\theta_{\alpha}$ be a horizontal shear of $C^\theta_{\alpha}[\varphi]$ with dilatation $w_{\alpha 1}$ in $\mathbb{D}$. If $\varphi\in\mathcal{K}$, then for all	$\alpha\in [0, 1/2\sqrt{2}]$, we have $\mathcal{F}^\theta_{\alpha}\in\mathcal{SHCC}$.
\end{corollary}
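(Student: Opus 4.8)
The plan is to read Corollary~\ref{Cor5.7P1} as the special case $\beta = 1$ of Theorem~\ref{Thm5.5P1}, so the work reduces to matching parameters and confirming the hypotheses of that theorem. First I would invoke the inclusion $\mathcal{K} \subsetneq \mathcal{S}^*(1/2)$ recorded just before the statement: since $\varphi \in \mathcal{K}$, it is in particular starlike of order $\delta = 1/2$, which is exactly the input demanded by Theorem~\ref{Thm5.5P1}. Next I would identify the mapping: by the conventions of Section~3 we have $C^\theta_\alpha[\varphi] = C^\theta_{\alpha 1}[\varphi]$, and $\mathcal{F}^\theta_\alpha$ is its horizontal shear with dilatation $w_{\alpha 1}$, so $\mathcal{F}^\theta_\alpha$ coincides with $F^\theta_{\alpha 1}$ in the notation of Theorem~\ref{Thm5.5P1}.

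With these identifications, substituting $\beta = 1$ into the admissible range $\alpha \in [0, 1/((1+\beta)\sqrt{2})]$ of Theorem~\ref{Thm5.5P1} produces precisely $\alpha \in [0, 1/(2\sqrt{2})]$, the interval claimed in the corollary. Hence Theorem~\ref{Thm5.5P1} delivers $F^\theta_{\alpha 1} = \mathcal{F}^\theta_\alpha \in \mathcal{SHCC}$ at once.

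The only points I would verify are that the standing assumptions of Theorem~\ref{Thm5.5P1} genuinely hold on this range. With the choice $w(z) = z/2$ fixed for the remainder of the section, the dilatation is $w_{\alpha 1}(z) = \alpha(1+1)\,w(z) = \alpha z$, whence $|w_{\alpha 1}(z)| = \alpha|z| < \alpha \le 1/(2\sqrt{2}) < 1$ on $\mathbb{D}$, so $F^\theta_{\alpha 1}$ is a well-defined sense-preserving harmonic mapping. I would also observe that the nonnegativity of the term discarded inside the proof of Theorem~\ref{Thm5.5P1}, namely $1 + (\delta - 1)\alpha - \alpha\beta/2 \ge 0$, specializes here to $1 - \alpha \ge 0$, which holds throughout the interval. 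Since this is purely a specialization of an already-proved result, there is no genuine obstacle; the only care needed is to note that the arcsine condition $\alpha(1+\beta) \le 1/\sqrt{2}$ is the binding constraint, being tighter than this nonnegativity requirement, and therefore alone dictates the stated range of $\alpha$.
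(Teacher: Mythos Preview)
Your proposal is correct and matches the paper's own argument exactly: the paper states the corollary as an immediate consequence of Theorem~\ref{Thm5.5P1} by taking $\beta=1$ and invoking $\mathcal{K}\subsetneq\mathcal{S}^*(1/2)$, with no further proof given. Your additional checks (sense-preservation of the dilatation and the nonnegativity of $1+(\delta-1)\alpha-\alpha\beta/2$) are sound and simply make explicit what the paper leaves implicit.
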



\section{Applications}\label{Sec6P1}
As an application to Theorems~\ref{Thm3.7P1}, \ref{Thm4.1P1}, \ref{Thm5.1P1} and \ref{Thm5.5P1}, 
in this section, we construct harmonic univalent mappings $F^\theta_{\alpha\beta}$
for certain elementary choices of $\varphi$ and their dilatations.

\begin{example}\label{Ex6.1P1}
Consider a non-constant analytic function $w(z)=-z$.
We choose $\varphi(z)=z/(1-z)\in \mathcal{S^{*}}$ 
in the definition of $C_{\alpha\beta}[\varphi]$ and obtain
\begin{equation}\label{Eq6.1P1}
C^\theta_{\alpha\beta}[\varphi](z)=e^{-i\theta}\int_{0}^{z} \frac{1}{(1-e^{i\theta}\zeta)^{\alpha (\beta+1)}}\, d\zeta=e^{-2i\theta}\bigg(\frac{1-(1-e^{i\theta}z)^{1-\alpha(1+\beta)}}{1-\alpha(1+\beta)}\bigg).
\end{equation}
By Remark~\ref{remark3.5P1}, first we note that $C^\theta_{\alpha\beta}[\varphi]$
is CHD for some $\theta~ (0\leq \theta <\pi)$ and following the construction given in Section~3, we can construct $F^\theta_{\alpha\beta}=H+\overline{G}$, a horizontal shear of $C^\theta_{\alpha\beta}[\varphi]=H-G$ 
defined by \eqref{Eq6.1P1} with dilatation $w_{\alpha \beta}=-\alpha(1+\beta)z$. 
It leads to
$$H-G=C^\theta_{\alpha\beta}[\varphi] ~~\mbox{ and }~~ \frac{G'(z)}{H'(z)}=-\alpha(1+\beta) z.
$$
The second equation along with the differentiation of the first equation produces
a system of equations in $H'$ and $G'$. An elementary calculation thus yields
$$
H(z)=e^{-i\theta}\int_{0}^{z} \frac{1}{(1-e^{i\theta}\zeta)^{\alpha(1+\beta)}(1+\alpha(1+\beta)\zeta)}\, d\zeta
$$ 
and
$$
G(z)=e^{-i\theta}\int_{0}^{z} \frac{-\alpha(1+\beta)\zeta}{(1-e^{i\theta}\zeta)^{\alpha(1+\beta)}(1+\alpha(1+\beta)\zeta)}\, d\zeta
$$
under the usual normalization $H(0)=G(0)=0$. Therefore, the harmonic mapping 
\begin{align}\label{Eq6.2P1}
F^\theta_{\alpha\beta}(z)=H(z)+\overline{G(z)} 
\end{align}
maps the unit disk onto a domain convex in the horizontal direction.
	
By using Theorem \ref{Thm3.7P1}, the mapping 
$F^\theta_{\alpha\beta}$ given by \eqref{Eq6.2P1} is univalent for all non-negative $\alpha,\beta$ satisfying 
\begin{equation}\label{Eq6.3P1}
\alpha(1+\beta)<1/3,
\end{equation}
since in this case we have $\|w\|=1$.
The first image of Figure~\ref{Fig3P1} demonstrates the univalence of $F^\theta_{\alpha\beta}$
for $\alpha,\beta$ satisfying \eqref{Eq6.3P1}, whereas the second image shows that there are 
non-univalent functions $F^\theta_{\alpha\beta}$ for $\alpha,\beta$ not satisfying \eqref{Eq6.3P1}. 

\begin{figure}[H]
\begin{minipage}[b]{0.55\textwidth}
\includegraphics[width=6.5cm]{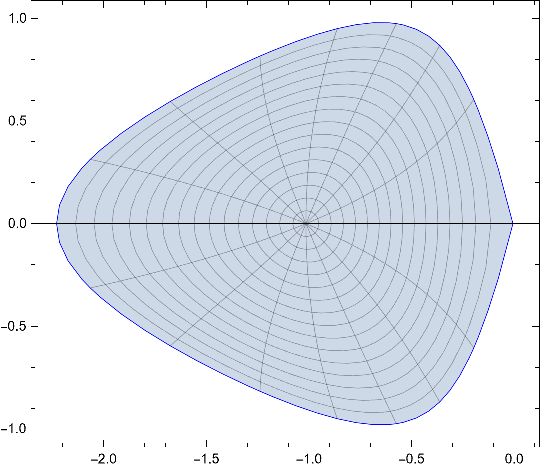}
\hspace*{1.3cm} For $\alpha=1/5$ and $\beta=1/2$
\end{minipage}
\begin{minipage}[b]{0.3\textwidth}
\includegraphics[width=5.5cm, height=5.8cm]{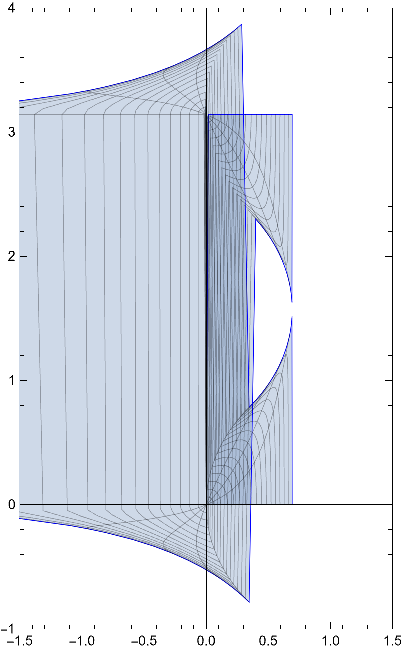}
\hspace*{0.3cm} For $\alpha=1/2$ and $\beta=1$
\end{minipage}
\caption{The image domains $F^\theta_{\alpha\beta}(\mathbb{D})$ for the above choices of $\alpha,\beta$.}\label{Fig3P1}
\end{figure}
\end{example}

\begin{remark}
As a consequence of Example~\ref{Ex6.1P1}, 
there are some $\alpha,\beta$ with 
$1/3\le \alpha(1+\beta)\le 1$ for which $F^\theta_{\alpha\beta}$ is locally univalent but not univalent. Moreover, similar remark also applies to the subsequent examples.
\end{remark}

\begin{example}
Consider $\varphi(z)=z$ and the function $w(z)=(2z+1)/(2+z)$. 
For this $\varphi$, the definition of $C^\theta_{\alpha\beta}[\varphi]$ is equivalent to 
\begin{equation}\label{Eq6.4P1}
C^\theta_{\alpha\beta}[\varphi](z)=e^{-i\theta}\int_{0}^{z} \frac{1}{(1-e^{i\theta}\zeta)^{\alpha \beta}}\, d\zeta=e^{-2i\theta}\bigg(\frac{1-(1-e^{i\theta}z)^{1-\alpha \beta}}{1-\alpha \beta}\bigg).
\end{equation}
Similar to the explanations used in Example~\ref{Ex6.1P1},
we can construct $F^\theta_{\alpha\beta}=H+\overline{G}$ and it generates
$$
H-G=C^\theta_{\alpha\beta}[\varphi] ~~\mbox{ and }~~ \frac{G'(z)}{H'(z)}= \alpha(1+\beta)\frac{2z+1}{2+z}.
$$
By solving these two equations, we obtain
$$
H(z)=e^{-i\theta}\int_{0}^{z} \frac{2+\zeta}{(1-e^{i\theta}\zeta)^{\alpha \beta}[(1-2\alpha(1+\beta))\zeta+2-\alpha(1+\beta)]}\, d\zeta
$$
and
$$
G(z)=e^{-i\theta}\int_{0}^{z}\frac{\alpha(1+\beta)(2\zeta+1)}{(1-e^{i\theta}\zeta)^{\alpha \beta}[(1-2\alpha(1+\beta))\zeta+2-\alpha(1+\beta)]}\, d\zeta
$$
under the standard normalization $H(0)=G(0)=0$. 
Therefore, the harmonic mapping 
\begin{align}\label{Eq6.5P1}
F^\theta_{\alpha\beta}(z)=H(z)+\overline{G(z)}
\end{align}
maps the unit disk $\mathbb{D}$ onto a domain convex in the horizontal direction.

Now, Theorem~\ref{Thm4.1P1} gives that 
$F^\theta_{\alpha\beta}$ given by \eqref{Eq6.5P1} is $\mathcal{SHU}$ for $\alpha,\beta$ satisfying the bound
\begin{equation}\label{Eq6.6P1}
|\alpha|\leq \frac{1}{2(2+|\beta|+2(1+|\beta|))},
\end{equation}
since in this case $\|w\|=1=\|w^*\|$.
While the second image of  Figure~\ref{Fig4P1} shows that there are non-stable harmonic univalent functions $F^\theta_{\alpha\beta}$
for $\alpha,\beta$ not satisfying \eqref{Eq6.6P1}, the first image of Figure~\ref{Fig4P1} demonstrates that $F^\theta_{\alpha\beta}$ is stable harmonic univalent function for $\alpha,\beta$ satisfying \eqref{Eq6.6P1}.
\begin{figure}[H]
\begin{minipage}[b]{0.45\textwidth}
\includegraphics[width=5.5cm]{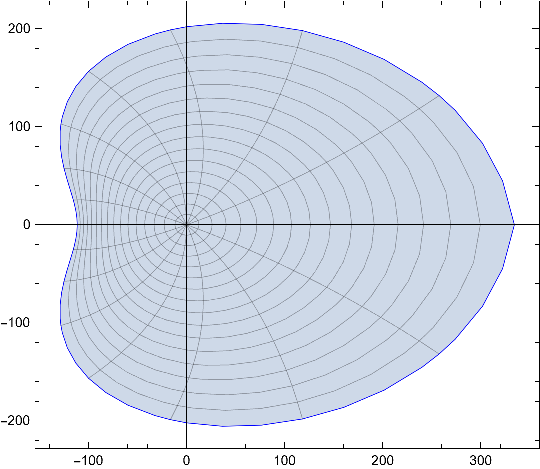}
\hspace*{0.7cm} For $\alpha=1/14$ and $\beta=1$
\end{minipage}
\begin{minipage}[b]{0.4\textwidth}
\includegraphics[width=5.5cm, height=5cm]{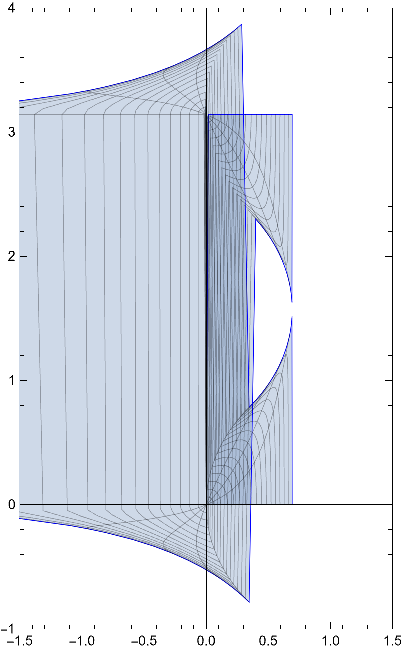}
\hspace*{1cm} For $\alpha=1/2$ and $\beta=1$
\end{minipage}
\caption{The image domains $F^\theta_{\alpha\beta}(\mathbb{D})$ for the above choices of $\alpha,\beta$.}\label{Fig4P1}
\end{figure}
\end{example}

\begin{example}
	We consider the analytic function $w(z)=\cos(\pi c)z/2$ and choose $\varphi(z)=z/(1-z)^2$ 
	in the definition of $C_{\alpha\beta}[\varphi]$ to obtain
	\begin{equation}\label{Eq6.7P1}
		C^\theta_{\alpha\beta}[\varphi](z)=e^{-i\theta}\int_{0}^{z} \frac{1}{(1-e^{i\theta}\zeta)^{\alpha (2+\beta)}}\, d\zeta=e^{-2i\theta}\bigg(\frac{1-(1-e^{i\theta}z)^{1-\alpha(2+\beta)}}{1-\alpha(2+\beta)}\bigg).
	\end{equation}
	Following the similar steps as explained in Example~\ref{Ex6.1P1}, 
one can easily obtain 
	$$
	H(z)=e^{-i\theta}\int_{0}^{z} \frac{2}{(1-e^{i\theta}\zeta)^{\alpha(2+\beta)}(2-\alpha(1+\beta)\cos(\pi c)z)}\, d\zeta 
	$$
and
$$G(z)=e^{-i\theta}\int_{0}^{z} \frac{\alpha(1+\beta)\cos(\pi c)z}{(1-e^{i\theta}\zeta)^{\alpha(2+\beta)}(2-\alpha(1+\beta)\cos(\pi c)z)}\, d\zeta
	$$
	under the usual normalization $H(0)=G(0)=0$. Therefore, the harmonic mapping 
	\begin{align}\label{Eq6.8P1}
		F^\theta_{\alpha\beta}(z)=H(z)+\overline{G(z)} 
	\end{align}
	maps the unit disk onto a domain convex in the horizontal direction.
	
	By using Theorem \ref{Thm5.1P1}, the mapping 
	$F^\theta_{\alpha\beta}$ given by \eqref{Eq6.8P1} is close-to-convex mapping for
	 all non-negative $\alpha,\beta$ satisfying 
	\begin{equation}\label{Eq6.9P1}
	\alpha(2+\beta)\le -2c. 
	\end{equation}
Figure~\ref{Fig5P1}'s first image illustrates the close-to-convexity of $F^\theta_{\alpha\beta}$
for $\alpha,\beta$ satisfying \eqref{Eq6.9P1} for $c=-4/10$, whereas Figure~\ref{Fig5P1}'s second image indicates the existence of non close-to-convex $F^\theta_{\alpha\beta}$ for $\alpha,\beta$ not satisfying \eqref{Eq6.9P1}.

	\begin{figure}[H]
		\begin{minipage}[b]{0.5\textwidth}
 			\includegraphics[height=4.1cm,width=6cm]{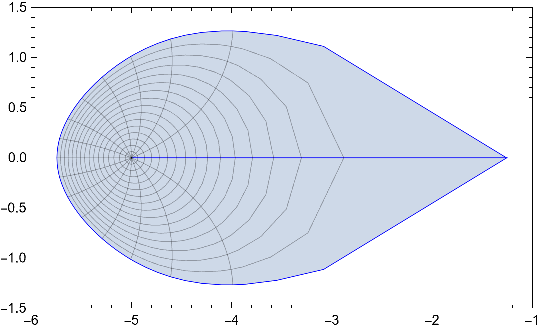}
			\hspace*{1.3cm} For $\alpha=4/15$ and $\beta=1$
		\end{minipage}
		\begin{minipage}[b]{0.45\textwidth}
			\includegraphics[width=5cm, height=4.1cm]{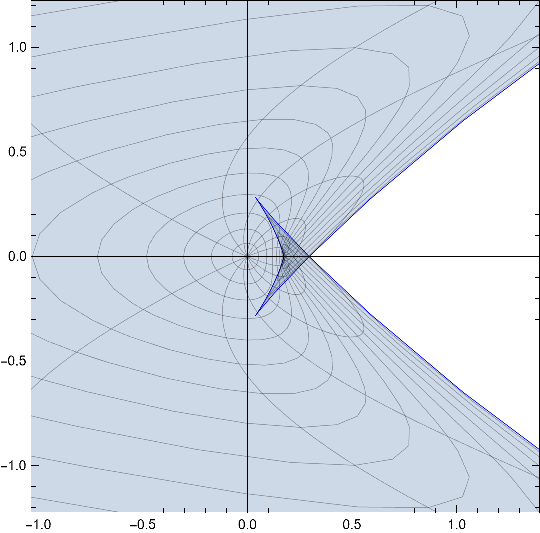}
			\hspace*{1.2cm} For $\alpha=-1$ and $\beta=1$
		\end{minipage}
		\caption{The image domains $F^\theta_{\alpha\beta}(\mathbb{D})$ for the above choices of $\alpha,\beta$.}\label{Fig5P1}
	\end{figure}
\end{example}

\begin{example}
Consider $w(z)=z/2$ and choosing $\varphi(z)=z/(1-z)$ 
in the definition of $C_{\alpha\beta}[\varphi]$, we obtain
\begin{equation}\label{Eq6.10P1}
C^\theta_{\alpha\beta}[\varphi](z)=e^{-i\theta}\int_{0}^{z} \frac{1}{(1-e^{i\theta}\zeta)^{\alpha (1+\beta)}}\, d\zeta=e^{-2i\theta}\bigg(\frac{1-(1-e^{i\theta}z)^{1-\alpha(1+\beta)}}{1-\alpha(1+\beta)}\bigg).
\end{equation}
Similar to the explanations used in Example~\ref{Ex6.1P1}, we find $F^\theta_{\alpha\beta}=H+\overline{G}$, where
$$
H(z)=e^{-i\theta}\int_{0}^{z} \frac{2}{(2-\alpha(1+\beta) \zeta)(1-e^{i\theta}\zeta)^{\alpha(1+\beta)}}\, d\zeta 
$$
and
$$
G(z)=e^{-i\theta}\int_{0}^{z} \frac{\alpha(1+\beta) \zeta}{(2-\alpha(1+\beta) \zeta)(1-e^{i\theta}\zeta)^{\alpha(1+\beta)}}\, d\zeta
$$
under the usual normalization $H(0)=G(0)=0$. Therefore, the harmonic mapping 
\begin{align}\label{Eq6.11P1}
F^\theta_{\alpha\beta}(z)=H(z)+\overline{G(z)} 
\end{align}
maps the unit disk onto a domain convex in the horizontal direction.

Inferred from Theorem \ref{Thm5.5P1} is that the mapping $F^\theta_{\alpha\beta}$ 
given by \eqref{Eq6.11P1} belongs to $\mathcal{SHCC}$ for all non-negative $\alpha,\beta$ satisfying
$\alpha \in [0, 1/(1+\beta)\sqrt{2}]$.
The stable harmonic close-to-convexity of $F^\theta_{\alpha\beta}$ is seen in the first image of Figure~\ref{Fig6P1} for $\alpha,\beta$ fulfilling the aforementioned limits, however the second image in Figure~\ref{Fig6P1} suggests the presence of non-stable harmonic close-to-convex $F^\theta_{\alpha\beta}$ for $\alpha,\beta$ not satisfying the aforementioned constraints.
\begin{figure}[H]
\begin{minipage}[b]{0.5\textwidth}
\includegraphics[width=5.7cm,height=4.1cm ]{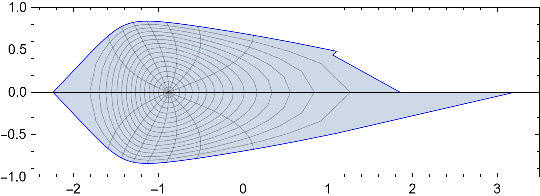}
\hspace*{0.8cm} For $\alpha=7/10$ and $\beta=1/100$
\end{minipage}
\begin{minipage}[b]{0.45\textwidth}
\includegraphics[width=5cm, height=4.1cm]{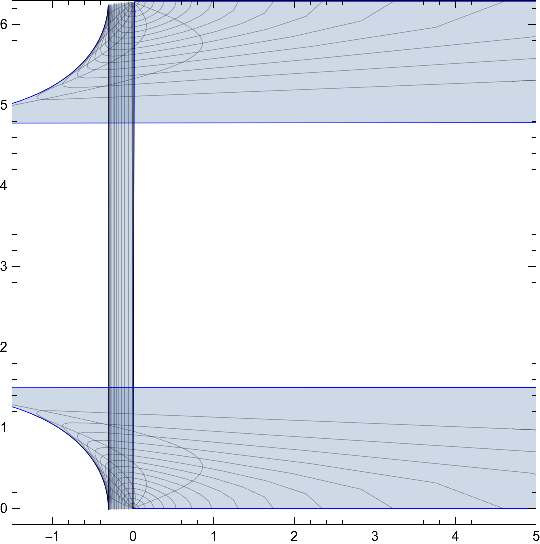}
\hspace*{0.8cm} For $\alpha=2$ and $\beta=-1/2$
\end{minipage}
\caption{The image domains $F^\theta_{\alpha\beta}(\mathbb{D})$ for the above choices of $\alpha,\beta$.}\label{Fig6P1}
\end{figure}
\end{example}


\section{Concluding Remarks}
In the light of Theorem~\ref{thm3.4P1} and Remark~\ref{remark3.5P1}, the operator 
$C^\theta_{\alpha \beta}[\varphi]$ is CHD for all non-negative $\alpha,\beta$ satisfying $\alpha(\beta+2(1-\delta))\leq 3$ whenever $\varphi\in\mathcal{S}^*(\delta)$.
This has been used in Definition~\ref{Def3.5P1} and subsequently in the relevant results. It would be further interesting to concentrate on the problem for the remaining values of $\alpha$ and $\beta$.

We recall from \cite[Theorem~1]{HM74} that the Ces\`aro transform $C[\varphi]$
preserves the class $\mathcal{K}$. However, its corresponding harmonic mapping $F^0_{11}$ is not necessarily convex whenever $\varphi\in\mathcal{K}$. Indeed,    
by choosing $\varphi(z)=z/(1-z)$ and $w(z)=z/2$, 
we construct $F^0_{11}=H+\overline{G}$ with its dilatation $w_{11}(z)=z$.
Now we define an analytic function $\varPhi_{\lambda, 0}:=H+\lambda G$, $\lambda\in\mathbb{T}$, so that 
\begin{equation}\nonumber
\varPhi'_{\lambda,0}(z)=H'(z)\cdot [1+\lambda \,w_{11}(z)]=(C[\varphi])'(z) 
\cdot \frac{1+\lambda z}{1-z}.
\end{equation}
Thus, for all $z\in \mathbb{D}$, we compute 
$$
{\rm Re}\bigg[1+\frac{z \varPhi''_{\lambda,0}(z)}{\varPhi'_{\lambda,0}(z)}\bigg]={\rm Re} \bigg[1+\frac{3z}{1-z}+\frac{\lambda z}{1+\lambda z}\bigg].
$$
By choosing $z=-1/2$ and $\lambda=1$, we note that
$$
{\rm Re}\bigg[1+\frac{z\varPhi''_{\lambda, 0}(z)}{\varPhi'_{\lambda, 0}(z)}\bigg]=-1<0.
$$
Thus, by \cite[Theorem~3.1]{HM13-1}, 
$F^0_{11}=H+\overline{G}$ is not convex harmonic mapping in $\mathbb{D}$.

Following this, it is important to study the preserving property of $C^\theta_{\alpha\beta}[\varphi]$
when $\varphi\in\mathcal{K}$. This is seen in the proof of Theorem~\ref{Thm3.7P1}. 
Indeed, we notice that for all non-negative $\alpha,\beta$ 
with $\alpha(\beta+2(1-\delta))\leq 2$, the integral transform 
$C^\theta_{\alpha\beta}[\varphi]$ preserves the class $\mathcal{K}$.
However, it would be interesting
to find ranges of $\alpha$ and $\beta$ under which $F^\theta_{\alpha\beta}$
is convex whenever $\varphi\in\mathcal{K}$. This remains as an open problem. 

On the one side, the manuscript deals with the sufficient conditions for the univalence of
$F^\theta_{\alpha\beta}$ under certain constraints on $\alpha,\beta$, whereas on the other side, 
we observe from Section~\ref{Sec6P1} that there are non-univalent functions $F^\theta_{\alpha\beta}$ for some choices of $\alpha,\beta$ not satisfying such constraints. 
This observation suggests us to study the necessary conditions for the univalence 
of $F^\theta_{\alpha\beta}$ in terms of bounds of $\alpha$ and $\beta$, which remains open as well.

\section*{Acknowledgement(s)}

The authors of this manuscript are grateful to Professor S. Ponnusamy for giving up his time to speak with us on this subject.

\section*{Disclosure statement}

The authors declare that they have no conflict of interest.

\section*{Funding}

The work of the second author is supported by University Grants Commission -Ref. No.: 1163/(CSIR-UGC NET JUNE 2019). The authors also acknowledge the support of DST-FIST Project 
(File No.: SR/FST/MS I/2018/26) for providing research facilities in the department.

\end{document}